\documentclass{elsarticle}

\usepackage{amsmath,amsthm,amssymb}
\usepackage{lineno}
\usepackage{latexsym}
\usepackage{verbatim}
\usepackage{enumerate}
\usepackage{color}

\newcommand{\Hall}{\mathrm{Hall}}
\newcommand{\AG}{\mathrm{AG}}
\newcommand{\PG}{\mathrm{PG}}
\newcommand{\GF}{\mathrm{GF}}
\newcommand{\PGL}{\mathrm{PGL}}

\newtheorem{theorem}{Theorem}[section]
\newtheorem{corollary}[theorem]{Corollary}
\newtheorem{proposition}[theorem]{Proposition}
\newtheorem{lemma}[theorem]{Lemma}

\newtheorem{remark}[theorem]{Remark}

\newtheorem{question}[theorem]{Question}

\journal{European Journal of Combinatorics}

\begin{document}

\begin{frontmatter}

\title{Inherited conics in Hall planes}

\author[eindhoven]{Aart Blokhuis} \ead{a.blokhuis@tue.nl}

\author[upiam,famnit]{Istv\'an Kov\'acs\fnref{ikthanks}}
\ead{istvan.kovacs@upr.si}

\author[bme,bolyai]{G\'abor P. Nagy\fnref{gnthanks,twothanks}}
\ead{nagyg@math.u-szeged.hu}

\author[elte,gac,famnit]{Tam\'as Sz\H onyi\fnref{tszthanks,twothanks}}
\ead{szonyi@cs.elte.hu}

\address[eindhoven]{Department of Mathematics and Computer Science, Eindhoven
University of Technology, Den Dolech 2, Eindhoven, The Netherlands}

\address[upiam]{UP IAM, University of Primorska, Muzejski trg 2, 6000, Koper,
Slovenia} \address[famnit]{UP FAMNIT, University of Primorska, Glagolja\v{s}ka
8, 6000, Koper, Slovenia}

\address[bme]{Department of Algebra, Budapest University of Technology, H-1111,
Budapest, Egri J\'ozsef utca 1, Hungary} \address[bolyai]{Bolyai Institute,
University of Szeged, H-6720 Szeged, Aradi v\'ertan\'uk tere 1, Hungary}

\address[elte]{Department of Computer Science, E\"otv\"os Lor\'and University,
H-1117 Budapest, P\'azm\'any P\'eter s\'et\'any 1/C, Hungary}

\address[gac]{MTA-ELTE Geometric and Algebraic Combinatorics Research Group,
H-1117 Budapest, P\'azm\'any P\'eter s\'et\'any 1/C, Hungary}

\fntext[ikthanks]{Partially supported by the Slovenian Research Agency (research
program P1-0285 and research projects N1-0038, N1-0062, J1-6720 and J1-7051).}

\fntext[gnthanks]{Partially supported by OTKA grant 119687.}

%\fntext[tszthanks]{Partially supported by OTKA grant K124950.}

\fntext[twothanks]{Partially supported by the
OTKA-ARRS Slovenian-Hungarian Joint Research Project, grant no. NN 114614 (in
Hungary) and N1-0032 (in Slovenia).}

\begin{abstract}
The existence of ovals and hyperovals is an old question in the
theory of non-Desarguesian planes. The aim of this paper is to
describe when a conic of ${\rm PG}(2,q)$ remains an arc in the
Hall plane obtained by derivation. Some combinatorial properties
of the inherited conics are obtained also in those cases when
it is not an arc. The key ingredient of the proof is an old lemma
by Segre-Korchm\'aros on Desargues configurations {with perspective
triangles} inscribed in a
conic.
\end{abstract}

\end{frontmatter}

%\linenumbers

\section{Introduction}

An \emph{arc} in a projective plane is a set of points no three of which are
collinear. An old theorem of Bose says that an arc can have at most $q+2$ points
if $q$ is even, and at most $q+1$ points if $q$ is odd. An arc having $k$ points
is also called a $k$-arc. A $k$-arc is said to be \emph{complete} if it is not
contained in a $(k+1)$-arc. $(q+1)$-arcs are called \emph{ovals}, $(q+2)$-arcs
are called \emph{hyperovals}. It is also known that ovals in planes of even
order are contained in a (unique) hyperoval. Arcs and ovals are among the most
studied objects in finite geometry. A motivating question was the existence of
ovals in any (not necessarily Desarguesian) plane. Several results are known for
arcs in the Desarguesian plane ${\rm PG}(2,q)$, here we just mention some of
them.

\begin{theorem}[Segre, \cite{S}] 
If $K$ is a complete $k$-arc in ${\rm PG}(2,q)$, then $k=q+2$ or $k\le q-\sqrt
q+1$ if $q$ is even, and $k=q+1$, in which case $K$ is a conic, or $k\le
q-\frac14\sqrt q+\frac74$ if $q$ is odd.
\end{theorem}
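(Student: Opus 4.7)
The plan is to build everything on Segre's \emph{Lemma of Tangents}. Taking three points $P_1,P_2,P_3$ of a $k$-arc $K$ as the vertices of the coordinate triangle of $\PG(2,q)$, each $P_i$ has exactly $t=q+2-k$ tangents to $K$. I would first derive the identity
\[
 \prod (\text{side-coordinates of the $3t$ tangents}) = (-1)^t,
\]
which follows from Wilson's theorem $\prod_{x\in\GF(q)^*}x=-1$ after partitioning the $q-1$ affine points of each side of the triangle into arc-points, secant-covered points, and tangent-points, and equating the product of the full list with the product of the excluded list.

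For $q$ odd and $k=q+1$, one has $t=1$ and every $P\in K$ has a unique tangent $\ell_P$. Applied to every triple $\{P_1,P_2,P_3\}\subset K$, the Lemma becomes a Menelaus-type relation between $\ell_{P_1},\ell_{P_2},\ell_{P_3}$ on the sides of the triangle $P_1P_2P_3$. I would fix five points of $K$, take the unique conic $\C$ through them, verify that arc-tangents and conic-tangents agree at these five points, and then propagate this agreement to every further point of $K$ via the Menelaus relation, forcing $K\subseteq\C$ and hence $K=\C$.

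For the upper bounds on complete arcs (the remaining cases), completeness forces every point $P\notin K$ to lie on a secant, and a double count produces exactly $kt$ tangents to $K$. I would dualize, constructing an algebraic envelope curve $\Gamma$ of degree polynomial in $t$ in the dual plane that passes through all $kt$ dual tangent-points; the key input is that the tangent-coordinates on each side are roots of a single low-degree polynomial determined by the Lemma of Tangents. Bounding the $\GF(q)$-points on the absolutely irreducible components of $\Gamma$ by Hasse--Weil and comparing with $kt\le \deg(\Gamma)\cdot(q+O(\sqrt q))$ produces, after optimizing, the claimed bounds $k\le q-\sqrt q+1$ for $q$ even and $k\le q-\tfrac14\sqrt q+\tfrac74$ for $q$ odd.

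The main obstacle is controlling the decomposition of $\Gamma$: one must show that it has no large low-degree component whose $\GF(q)$-points consist essentially of the dual tangents of a single conic, which would absorb too many of the $kt$ points and destroy the Hasse--Weil estimate. This is where the characteristic enters decisively — in even characteristic every conic has a nucleus through which all its tangents pass, changing the shape of the extremal configuration — and this is the source of the different bounds in the two cases.
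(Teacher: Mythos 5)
This theorem is not proved in the paper at all: it is quoted from Segre's survey \cite{S} as classical background (the paper even remarks that no analogous embeddability results are known in non-Desarguesian planes). So there is no internal proof to compare you against; I can only assess your sketch against the known proof, and your route --- Lemma of Tangents, then the conic characterization of $(q+1)$-arcs for $q$ odd, then an algebraic envelope of the tangents plus a point-count on curves for the completeness bounds --- is indeed the standard Segre strategy (see Chapter 10 of \cite{Hir}).

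As a proof, however, the proposal has concrete gaps and a few misstatements precisely at the load-bearing points. First, the Lemma of Tangents reads $\prod a_i b_i c_i=-1$, not $(-1)^t$: the bisecants through the three vertices cancel by concurrency ($(m_1/m_2)(m_2/m_0)(m_0/m_1)=1$ for each further arc point $M$), and the three Wilson products contribute $(-1)^3$; the discrepancy matters for $q$ odd and $t$ even. Second, the count of $kt$ tangents has nothing to do with completeness --- every point of any $k$-arc lies on exactly $t=q+2-k$ tangents; completeness is used later, to forbid adjoining a point and to exclude certain components of the envelope. Third, the envelope is not of some unspecified degree ``polynomial in $t$'': it has class exactly $t$ for $q$ even and $2t$ for $q$ odd, and this doubling (forced by the sign $-1$ in odd characteristic, not by the nucleus of a conic) is the true source of the divergence of the two bounds, including the factor $\frac14$. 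Fourth, for $q$ even no Hasse--Weil estimate is needed: completeness rules out linear components of the class-$t$ envelope, the elementary bound of roughly $(t-1)q+t$ points for such a curve gives $(q+2-k)(q+1-k)\ge q$, and hence $k\le q-\sqrt q+1$; Hasse--Weil and the delicate component analysis enter only in the odd case, where they produce $k\le q-\frac14\sqrt q+\frac74$. That component analysis, which you correctly identify as the main obstacle, is the genuinely hard part of Segre's argument and is left entirely open here, so the proposal is a correct plan rather than a proof.
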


For a survey of results on arcs and blocking sets we refer to the book by
Hirschfeld \cite{Hir}. Relatively few results are known for (complete) arcs in
non-Desarguesian planes. In particular, no embeddability results similar to
Segre's theorems are known. Instead of giving a full list of the results we just
refer to an old survey paper by the fourth author \cite{SzTBari} and pick some
characteristic results about arcs. Of course, the focus was on non-Desarguesian
planes which are close to Galois planes. This means that most results are about
arcs of Hall planes, Andr\'e planes and their duals (Moulton planes). In the
early years, researchers wanted to find ovals and hyperovals in non-Desarguesian
planes. There are such examples by Rosati, Bartocci, Korchm\'aros \cite[Theorem
3.1]{SzTBari}. An early important result about complete arcs is due to
Menichetti: there are complete $q$-arcs in Hall planes of even order ($\ge 16$)
\cite{Men}. A similar but easier result is due to Sz\H onyi: there are complete
$(q-1)$-arcs in Hall planes of odd order \cite[Theorem 4.6]{SzTBari}. A natural
idea is to start with an oval (or a conic) of the Desarguesian plane and study
the combinatorial properties of these sets in the non-Desarguesian plane
(obtained from the Desarguesian one by replacing some of the lines).

In this paper, we shall systematically study inherited conics in Hall planes. In
the next section some fundamental results used in the proofs are collected. Then
we discuss old and new results about different types of conics: parabolas,
hyperbolas, and ellipses and decide whether they yield inherited arcs or not.
Some cases were completely known before, some were not. The precise results are
stated in the corresponding sections.

We should remark that Barwick and Marshall \cite{BarMar} found a necessary and
sufficient condition in terms of the equation of the conic guaranteeing that it
remains an arc in the Hall plane. The disadvantage of the result is that the
condition is not easy to check explicitly.

Throughout the paper {\em conic} will stand for {\em irreducible conic}.

\section{The Hall plane}\label{hall}

In this section, the Hall planes are described briefly by using derivation and
also by giving the lines explicitly.

Consider the Desarguesian projective plane ${\rm PG}(2,q^2)$, let $\ell$ be a
line and let $D$ be a Baer subline of $\ell$. So $D\cong {\rm PG}(1,q)\subset
{\rm PG}(1,q^2)=\ell$. We call $\ell{=\ell_\infty}$ the line at infinity. The
points of the affine Hall plane $\Hall(q^2)$ are the points of ${\rm
PG}(2,q^2)\setminus {\ell_\infty}$. Lines whose infinite point does not belong
to $D$ remain the same (`old lines'). Instead of lines intersecting
${\ell_\infty}$ in a point of $D$ we consider all Baer subplanes containing $D$.
The affine part of these Baer subplanes are the `new lines'. It is not difficult
to show that this incidence structure is an affine plane (and the translations
of the Desarguesian affine plane are translations in the Hall plane). The
projective Hall plane is the projective closure of this affine plane.

For the sake of completeness, we describe the affine Hall plane $\Hall(q^2)$
explicitly. Points are the pairs $(x,y)$, where $x,y\in \GF(q^2)$. Old lines
have equation $Y=mX+b$, where $m\notin \GF(q)$. New lines are $\{(a+\lambda
u,b+\lambda v) : u,v\in \GF(q)\}$, where $a,b,\lambda$ are fixed elements of
$\GF(q^2)$. Note that the same Baer subplane is obtained for several
$a,b,\lambda$. In this case we have the {\em standard derivation set}, `the
usual ${\rm PG}(1,q)$':
\[
D=\{(m)\mid m\in \GF(q)\cup\{\infty\}\}=\{(x:y:0)\mid x,y\in\GF(q)\}.
\]

\section{Useful facts about conics}

Let us begin with the following {result} by
Segre and Korchm\'aros {\cite[page 617]{SK}} which plays
a crucial role in our proof.

\begin{theorem}[Segre-Korchm\'aros]\label{sk1}
\begin{enumerate}[(a)]
\item Let $K$ be a conic of ${\rm PG}(2,q)$, $q$ even, and $r$ be a line which
is not a tangent of $K$. For every triple $\{P_1,P_2,P_3\}\subset r\setminus K$
there exists one and only one triangle $\{A_1,A_2,A_3\}$ inscribed in
$K\setminus r$ such that $A_iA_j\cap r=P_k$, where $i,j,k$ is a permutation of
$1,2,3$.
\item Let $K$ be a conic of ${\rm PG}(2,q)$, $q$ odd, and $r$ be a line which is
not a tangent of $K$. For every triple $\{P_1,P_2,P_3\}\subset r\setminus K$
there exist at most two triangles $\{A_1,A_2,A_3\}$ inscribed in $K\setminus r$
such that $A_iA_j\cap r=P_k$, where $i,j,k$ is a permutation of $1,2,3$.
Moreover, if $r$ is a tangent to $K$ then there is one and only one such
triangle inscribed in $K\setminus r$.
\end{enumerate}
\end{theorem}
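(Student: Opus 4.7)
The plan is to translate the existence of an inscribed triangle into a fixed-point problem for a single element of $\PGL(2,q)$ acting on the conic, and then to count its fixed points by exploiting the fact that $P_1,P_2,P_3$ lie on the common line $r$.

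\textbf{Setup.} I would begin by parametrizing $K$ rationally by $\PG(1,q)$. For each $P_i\in r\setminus K$, the projection from $P_i$ induces an involution $\sigma_i\in\PGL(2,q)$ of $K$, with $\sigma_i(A)=B$ iff $P_i,A,B$ are collinear (and $\sigma_i(A)=A$ when $P_iA$ is tangent to $K$ at $A$). A triangle $\{A_1,A_2,A_3\}$ with $A_iA_j\cap r=P_k$ is equivalent to $A_2=\sigma_3(A_1)$, $A_3=\sigma_2(A_1)$, together with the closing condition $\sigma_1(A_2)=A_3$; combining these, $A_1$ must be a fixed point of
\[
\tau:=\sigma_2\sigma_1\sigma_3\in\PGL(2,q)
\]
lying in $K\setminus r$. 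So the proof reduces to counting $\mathbb F_q$-rational fixed points of $\tau$ off $K\cap r$.

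\textbf{Key observation.} Let $\{X,Y\}=K\cap r$ (in the algebraic closure). Since $P_i\in r$, the line $P_iX$ coincides with $r$ and meets $K$ in $\{X,Y\}$. Hence each $\sigma_i$ swaps $X$ and $Y$ in the non-tangent case and fixes $X$ in the tangent case, and $\tau$ inherits the same behaviour. In particular $\tau\neq\mathrm{id}$, so it has at most two fixed points on $K$.

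\textbf{Counting.} In the non-tangent case, conjugate $\tau$ over $\overline{\mathbb F_q}$ so that $X,Y$ become $0,\infty$; then $\tau$ takes the form $z\mapsto c/z$ for some $c\neq 0$, with fixed points solving $z^2=c$. For $q$ even, squaring is a bijection on $\overline{\mathbb F_q}$, so there is a unique root, and Galois-invariance of the fixed set of the $\mathbb F_q$-rational map $\tau$ forces this root to be $\mathbb F_q$-rational, yielding the unique triangle of part (a). For $q$ odd, $z^2=c$ has $0$ or $2$ solutions in $\mathbb F_q$, yielding the at-most-two triangles of part (b). In the tangent case (odd $q$), put $X=\infty$ in affine coordinates; each $\sigma_i$ becomes a reflection $z\mapsto -z+2y_i$ with $y_i$ the second tangency point from $P_i$, and a direct composition gives
\[
\tau(z)=-z+2(y_1+y_2-y_3),
\]
itself an involution whose unique finite fixed point determines the unique inscribed triangle.

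\textbf{Main obstacle.} The most delicate step is the rationality in the external non-tangent even case, where $\{X,Y\}$ is a conjugate pair in $\mathbb F_{q^2}$ and the conjugation taking $\{X,Y\}$ to $\{0,\infty\}$ is defined only over $\mathbb F_{q^2}$; the Galois-invariance argument is precisely what rescues $\mathbb F_q$-rationality of the unique fixed point. A separate routine verification is needed to ensure that the three vertices $A_1,A_2,A_3$ produced are pairwise distinct and avoid $r$: any coincidence $A_1=A_j$ would force $A_1$ to be a fixed point of some $\sigma_i$, which together with $A_1\in K\setminus r$ and $r$ non-tangent can be excluded on geometric grounds.
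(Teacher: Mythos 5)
The paper does not prove Theorem~\ref{sk1}: it is quoted as a known result of Segre--Korchm\'aros \cite{SK}, so there is no internal proof to compare against. Your argument is, in substance, the classical proof of that lemma: encode collinearity with $P_i$ by the involution $\sigma_i$ of $K\cong\PG(1,q)$, observe that the closing condition makes $A_1$ a fixed point of $\tau=\sigma_2\sigma_1\sigma_3$, and note that since each $\sigma_i$ interchanges the two points of $K\cap r$ over $\overline{\GF(q)}$, so does $\tau$; hence $\tau$ is a nontrivial map of the form $z\mapsto c/z$ whose rational fixed points can be counted directly in each characteristic, with Galois invariance supplying rationality of the unique fixed point when $q$ is even. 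This is sound and yields exactly the statement. Two repairs are needed. First, a trivial one: the composition in the tangent case gives $\tau(z)=-z+2(y_2+y_3-y_1)$, not $-z+2(y_1+y_2-y_3)$; immaterial, since all that matters is that $\tau$ is $z\mapsto -z+\mathrm{const}$. Second, and more substantively, your nondegeneracy step is misstated: a point $A_1\in K\setminus r$ fixed by a single $\sigma_3$ is a perfectly possible configuration (the tangent at $A_1$ simply meets $r$ at $P_3$), so it cannot be ``excluded on geometric grounds'' from $A_1\in K\setminus r$ and $r$ non-tangent alone. What actually does the work is the fixed-point equation: if $\sigma_3(A_1)=A_1$ and $\tau(A_1)=A_1$, then $\sigma_1(A_1)=\sigma_2(A_1)$, which forces either the chord joining $A_1$ to this common image or the tangent at $A_1$ to contain both $P_1$ and $P_2$, hence to equal $r$, contradicting $A_1\notin r$; the remaining coincidence $A_2=A_3$, which is not of the form ``$A_1$ fixed by some $\sigma_i$,'' is handled the same way, since then the line $A_1A_2$ contains both $P_2$ and $P_3$. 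With these points tightened your proof is complete and is essentially the original Segre--Korchm\'aros argument.
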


Actually, one can say even more for $q$ odd, by using an observation of
Korchm\'aros {\cite[Teorema 1]{K1}}. Sometimes this observation is called the
axiom of Pasch for external/internal points.

\begin{proposition}\label{skodd}
Let $K$ be a conic and $r$ be a line of ${\rm PG}(2,q)$, $q$ odd. If  $r$ is not
a tangent and $\{P_1,P_2,P_3\}$ contains either three or exactly one external
point then there are exactly two triangles $\{A_1,A_2,A_3\}$ inscribed in
$K\setminus r$ such that $A_iA_j\cap r=P_k$, where $i,j,k$ is a permutation of
$1,2,3$. In the other cases, for example, when the three points are internal,
there is no $\{A_1,A_2,A_3\}$ with this property.
\end{proposition}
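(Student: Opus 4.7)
My plan is to combine Theorem~\ref{sk1}(b) with Korchm\'aros's observation from \cite{K1} (the ``Pasch axiom'' mentioned just above the proposition) via a double counting argument. Korchm\'aros's observation supplies the necessary direction: for any triangle $\{A_1,A_2,A_3\}$ inscribed in $K\setminus r$, the three traces $P_k = A_iA_j \cap r$ together contain either three or exactly one external point. This at once handles the ``other cases'' part of the statement.

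For the other direction, I would first show that $r\setminus K$ contains equal numbers of external and internal points. If $Q$ is the quadratic form defining $K$ and $\chi$ is the quadratic character of $\GF(q)^*$, then $\chi(Q(P))$ is $+1$ at externals, $-1$ at internals, and $0$ on $K$. Parametrizing $r$ linearly, $Q|_r$ becomes a non-degenerate binary quadratic in one affine parameter, and the standard identity $\sum_{t\in \GF(q)} \chi(\alpha t^2 + \beta t + \gamma) = -\chi(\alpha)$ combined with the point at infinity yields $\sum_{P\in r}\chi(Q(P))=0$. Hence $r$ has $n/2$ external and $n/2$ internal points off $K$, where $n=|r\setminus K|$ equals $q-1$ or $q+1$ according as $r$ is a secant or an external line. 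A routine binomial count then gives exactly $\tfrac{1}{2}\binom{n}{3}$ three-subsets of $r\setminus K$ that contain three or exactly one external.

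Now, for each $3$-subset of $r\setminus K$, let $T_j$ be the number of such subsets realized by exactly $j$ inscribed triangles. By Theorem~\ref{sk1}(b), $T_j=0$ for $j\ge 3$; the count of $3$-subsets yields $T_0+T_1+T_2=\binom{n}{3}$; and counting inscribed triangles in $K\setminus r$ (which are just $3$-subsets of $K\setminus r$, since a conic is an arc) with multiplicity gives $T_1+2T_2 = \binom{|K\setminus r|}{3} = \binom{n}{3}$. Korchm\'aros's observation also gives $T_1+T_2 \le \tfrac{1}{2}\binom{n}{3}$, so
\[
\binom{n}{3} \;=\; T_1+2T_2 \;\le\; 2(T_1+T_2) \;\le\; \binom{n}{3}.
\]
All inequalities are equalities, forcing $T_1=0$ and $T_2 = \tfrac{1}{2}\binom{n}{3}$, which is exactly the number of ``three or exactly one external'' triples. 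Each such triple therefore supports exactly two inscribed triangles, as claimed.

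The main obstacle is invoking Korchm\'aros's observation in a consistent form; the remaining character-sum identity and combinatorial bookkeeping are routine. An alternative (self-contained) route is to parametrize $K$ by $\mathbb{P}^1$, observe that each $P_k$ induces an involution $\iota_k$ of $K$ via chord pairing, reduce the existence of the triangle to the existence of a rational fixed point of the Möbius map $\iota_2\iota_1\iota_3$, and compute directly that this map is itself an involution whose non-trivial eigenvalue-squared equals $-\prod_k Q(P_k)$ up to a square; rationality of the fixed points then translates into the same parity condition.
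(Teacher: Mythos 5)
Your argument is correct, but note that the paper does not actually prove Proposition~\ref{skodd}: it is quoted from Korchm\'aros \cite{K1} (Teorema 1) with no argument supplied, so there is no in-text proof to compare against step by step. What you have done is reconstruct a proof from two ingredients the paper does make available, namely the upper bound in Theorem~\ref{sk1}(b) and the ``Pasch'' parity observation (every triangle inscribed in $K\setminus r$ traces out a triple of $r\setminus K$ containing an odd number of external points), and your double count is a clean way to upgrade these to the exact statement. The bookkeeping checks out: a non-tangent line carries equally many external and internal points off $K$ (your character-sum computation works, as does counting tangent/point incidences); the number of odd-parity triples is indeed $\tfrac12\binom{n}{3}$ with $n=q\mp1$; and $|K\setminus r|=|r\setminus K|=n$ in both the secant and the external-line case, so the chain $\binom{n}{3}=T_1+2T_2\le 2(T_1+T_2)\le\binom{n}{3}$ closes and forces $T_1=0$ and $T_2=\tfrac12\binom{n}{3}$, i.e.\ every admissible triple is realized exactly twice and no other triple is realized. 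Two small points should be made explicit for the identity $T_1+2T_2=\binom{n}{3}$ to be legitimate: the three trace points of an inscribed triangle are pairwise distinct (if $A_2A_3$ and $A_1A_3$ met $r$ in the same point, that point would be $A_3\in r$) and they lie in $r\setminus K$ (the chord $A_iA_j$ already meets $K$ in $A_i$ and $A_j$), so each triangle is counted against exactly one triple of $r\setminus K$. The only external input is the parity observation itself, which you, like the paper, import from \cite{K1}; provided Teorema 1 there is the parity statement rather than the full proposition, your derivation genuinely supplies the argument that the paper delegates to the reference.
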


The next result is useful when we wish to determine the intersection of
a conic and a Baer subplane.

\begin{proposition}\label{sk2}
Let $K$ be a conic in $B={\rm PG}(2,q)$, a Baer subplane of ${\rm PG}(2,q^2)$.
Let $r$ be line in $B$. Extend $K$ and $r$ to $K'$ and $r'$ in the larger plane
by using the same equation. Then if $r$ is a tangent, then so is $r'$, otherwise
$r'$ is a secant of $K'$. In other words $K'$ is a parabola if the original
conic $K$ was a parabola, where $r$ and $r'$ are the line at infinity, and it is
a hyperbola if it is not.
\end{proposition}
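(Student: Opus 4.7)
I plan to reduce the problem to a single binary quadratic form over $\GF(q)$ and then exploit the fact that $\GF(q^2)$ is the splitting field of every irreducible quadratic over $\GF(q)$. Since $r$ lies in the Baer subplane $B$, it admits a parametrization by $(s:t)\in\PG(1,q)$ with coefficients in $\GF(q)$, and the identical parametrization, now ranging over $\PG(1,q^2)$, describes $r'$. Substituting into the homogeneous equation of $K$ (also defined over $\GF(q)$) produces a nonzero binary quadratic form $F(s,t)\in\GF(q)[s,t]$---it cannot be the zero form, since a nondegenerate conic contains no line---whose zeros in $\PG(1,q)$ are $r\cap K$ and whose zeros in $\PG(1,q^2)$ are $r'\cap K'$.

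The remaining step is a three-case analysis of the factorization of $F$ over $\GF(q)$. If $F$ is a perfect square, its unique double root persists in $\PG(1,q^2)$, so both $r$ and $r'$ are tangent. If $F$ splits into two distinct $\GF(q)$-linear factors, both $r$ and $r'$ are secants. Finally, if $F$ is irreducible over $\GF(q)$, then $r\cap K=\emptyset$, but $F$ must split into two distinct linear factors over the quadratic extension $\GF(q^2)$, whence $r'$ becomes a secant of $K'$. These three cases give exactly the equivalence claimed in the proposition: tangency is preserved, and every non-tangent line becomes a secant of the extended conic.

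The second assertion then follows by specialising to $r=\ell_\infty$: $K$ is a parabola iff $\ell_\infty$ is tangent to $K$, and the preceding argument shows that this tangency is inherited by $K'$; in the opposite case $\ell_\infty$ is a secant or an external line of $K$, hence a secant of $K'$, so $K'$ is a hyperbola. The one point to watch is that for $q$ even the usual discriminant criterion is unavailable, which is precisely why I would phrase the trichotomy in terms of the factorization of $F$ rather than discriminants; beyond this small bookkeeping matter I do not anticipate any real obstacle.
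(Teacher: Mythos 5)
Your proof is correct. The paper states Proposition \ref{sk2} without proof, treating it as a standard fact, so there is no argument of the authors to compare against; your restriction of the quadratic form of $K$ to a $\GF(q)$-parametrization of $r$, followed by the trichotomy on the factorization of the resulting binary quadratic form over $\GF(q)$ versus $\GF(q^2)$, is exactly the natural argument and fills that gap. The two points that need care --- that an irreducible quadratic over $\GF(q)$ splits into two \emph{distinct} linear factors over $\GF(q^2)$ (separability, since finite fields are perfect), and that the discriminant criterion is unavailable in even characteristic --- are both handled; the only detail left tacit is that $K'$ is again nondegenerate (a nonsingular ternary quadratic form is absolutely irreducible), which is needed for the words ``tangent'', ``secant'' and ``hyperbola'' to carry their usual meaning for $K'$.
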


The difference in extending a hyperbola and an ellipse is that the infinite
points in ${\rm PG}(2,q^2)$ belong to the Baer subplane or not. This
observation can be used to determine the intersection of a Baer subplane and
a conic. The only thing one needs is that five points determine a conic
uniquely.

\begin{corollary}\label{skcor}
Let $B$ be a Baer subplane and $K$ a conic in ${\rm PG}(2,q^2)$. Then either
$|B\cap K|\leq 4$ or $B\cap K$ is a conic of $B$.
\end{corollary}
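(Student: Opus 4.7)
The plan is to show that whenever $B\cap K$ contains five points, those five points already force $K$ to be the extension to $\mathrm{PG}(2,q^2)$ of some conic of $B$. The main ingredients are Proposition~\ref{sk2} (extending a conic of $B$ to $\mathrm{PG}(2,q^2)$ yields a conic of $\mathrm{PG}(2,q^2)$ with the same defining equation) and the classical fact that five points with no three collinear determine a unique conic in any Desarguesian plane.

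Concretely, I would argue as follows. Assume $|B\cap K|\geq 5$ and choose five points $P_1,\ldots,P_5\in B\cap K$. Since these points lie on the conic $K$ of $\mathrm{PG}(2,q^2)$, no three of them are collinear in $\mathrm{PG}(2,q^2)$, hence also no three are collinear in $B$. Thus they determine a unique conic $K_B$ of $B\cong \mathrm{PG}(2,q)$. By Proposition~\ref{sk2}, the extension $K_B'$ of $K_B$ to $\mathrm{PG}(2,q^2)$ (defined by the same equation, which has coefficients in $\mathrm{GF}(q)$) is again a conic, and $K_B'$ passes through $P_1,\ldots,P_5$. By the uniqueness of a conic through five points in general position in $\mathrm{PG}(2,q^2)$, we conclude $K=K_B'$.

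It remains to verify $B\cap K_B'=K_B$, which will finish the proof since then $B\cap K=K_B$ is a conic of $B$. The inclusion $K_B\subseteq B\cap K_B'$ is immediate. For the reverse inclusion, a point of $B\cap K_B'$ has all coordinates in $\mathrm{GF}(q)$ and satisfies an equation with coefficients in $\mathrm{GF}(q)$, so it belongs to $K_B$. The only place where care is needed is the last identification; in coordinates this simply uses that $B$ is (up to collineation) the standard Baer subplane of $\mathrm{GF}(q)$-rational points, and that the equation of $K_B'$ is literally that of $K_B$. There is no real obstacle beyond making this coordinate choice explicit.
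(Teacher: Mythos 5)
Your proposal is correct and is essentially the paper's own argument: the corollary is stated there with only the remark that ``five points determine a conic uniquely,'' combined with Proposition~\ref{sk2}, and your write-up simply makes that sketch explicit (five points of $B\cap K$ are in general position, determine a conic of $B$ whose extension must coincide with $K$ by uniqueness). No gaps; the extra care you take with $B\cap K_B'=K_B$ is a harmless elaboration of what the paper leaves implicit.
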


\section{Consequences for the number of collinear points}

Let $D=\{(m)\mid m\in \GF(q)\cup\{\infty\}\}=\{(x:y:0)\mid x,y\in\GF(q)\}$ be
the standard derivation set we used to define the Hall plane in Section \ref{hall}.
In this section we look at the case that the line
at infinity, $\ell_\infty$ is not a tangent.

We first consider the case that $q$ is even (and at least $4$).

\begin{proposition}\label{hypeven}
If $K$ is a hyperbola either having two points in $D$, or two conjugate points
outside $D$, then $K$ is defined over a subplane (containing $D$), hence in the
Hall plane it has $q-1$ or $q+1$ collinear points, and the remaining lines
of the Hall plane intersect $K$ in at most two points.
\end{proposition}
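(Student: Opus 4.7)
First, the plan is to exhibit a Baer subplane $B\supseteq D$ with $|B\cap K|\ge 5$: by Corollary~\ref{skcor} this forces $B\cap K$ to be a conic of $B$ of size $q+1$, whence $K$ is the $\PG(2,q^2)$-extension of this $B$-conic and is defined over $B$. In case (a), both infinite points of $K$ lie in $D$ and hence in every Baer subplane through $D$, so only three further (affine) points of $K$ on one common such subplane need be produced; in case (b) the two infinite points of $K$ lie outside $D$, so all five required points are affine and the construction is slightly longer.

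The main tool is Segre--Korchm\'aros (Theorem~\ref{sk1}(a)), which applies because $q$ is even and $\ell_\infty$ is not a tangent of $K$. I would pick three points $P_1,P_2,P_3\in D\setminus K$ (possible since $|D\setminus K|\ge q-1\ge 3$ in case (a) and $=q+1\ge 5$ in case (b)); this produces the unique triangle $A_1A_2A_3$ inscribed in $K$ with $A_iA_j\cap\ell_\infty=P_k$. Let $\sigma$ be the (unique) Baer involution fixing $D$ pointwise and $A_1$, and let $B$ be its fixed Baer subplane. Applying $\sigma$, the image $A_1\sigma(A_2)\sigma(A_3)$ is a triangle inscribed in $\sigma(K)$ whose sides still pass through the $\sigma$-fixed $P_k$; if $\sigma(K)=K$, the uniqueness in Theorem~\ref{sk1}(a) forces $\sigma(A_j)=A_j$ for $j=2,3$, hence $A_2,A_3\in B$. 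To force $\sigma(K)=K$ I would invoke Bezout: two distinct smooth conics of $\PG(2,q^2)$ share at most four points, so exhibiting five common points of $K$ and $\sigma(K)$ suffices. Three are immediate ($A_1$ together with the $\sigma$-stable pair $K\cap\ell_\infty$, since $\sigma|_{\ell_\infty}$ is the Baer involution of $\ell_\infty$ fixing $D$); producing the remaining two is the main obstacle, and I would expect to close the gap either by iterating the Segre--Korchm\'aros construction with a second, judiciously chosen triple of base points, or by exploiting that $\sigma$ must permute additional structural elements of the configuration (such as further $\sigma$-invariant chords meeting $D$).

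Once $K$ is defined over $B$, the remaining conclusions are routine. The new Hall line $B\setminus D$ carries $|B\cap K|-|D\cap K|$ points of $K$: this is $q-1$ in case (a) and $q+1$ in case (b). An old Hall line is a line of $\PG(2,q^2)$ and meets $K$ in at most two points trivially. For a new Hall line associated with some $B'\ne B$ through $D$, Corollary~\ref{skcor} gives $|B'\cap K|\le 4$ (it cannot equal $q+1$, else $K$ would also be defined over $B'$, which a further Bezout/uniqueness argument rules out). In case (a) subtracting $|D\cap K|=2$ immediately yields $|(B'\setminus D)\cap K|\le 2$; case (b) requires sharpening this to $|B'\cap K|\le 2$, which I anticipate follows from a further Segre--Korchm\'aros/Bezout argument ruling out three or four common affine points of $K$ with any second Baer subplane through $D$. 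Thus the Bezout-closure step forcing $\sigma(K)=K$ and the accompanying uniqueness-of-subplane refinement (especially in case (b)) are, in my view, the principal technical obstacles.
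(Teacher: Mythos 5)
Your overall strategy follows the paper's (Segre--Korchm\'aros produces the triangle; the triangle lies in a Baer subplane $B$ containing $D$; hence $K$ is defined over $B$), but the central step is left with a genuine gap, and the route you propose for it would not close. You try to prove $A_2,A_3\in B$ by finding a Baer involution $\sigma$ fixing $D$ pointwise and $A_1$ with $\sigma(K)=K$, and you concede that you can exhibit only three of the five common points of $K$ and $\sigma(K)$ that you would need. This is not a removable technicality: the involution you invoke is not unique (there are $q+1$ Baer subplanes through $D$ and $A_1$, hence $q+1$ such involutions), only the one belonging to the ``right'' subplane can possibly fix $K$, and identifying that subplane is essentially the statement to be proved --- the argument is circular. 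The step is in fact elementary and needs no conic-preserving collineation: $A_1$, $A_2$ and $P_3=A_1A_2\cap\ell_\infty\in D$ determine a \emph{unique} Baer subplane $B\supseteq D$ (namely the new line of $\Hall(q^2)$ through $A_1$ and $A_2$), and then $A_3=A_1P_2\cap A_2P_1$ is the intersection of two lines of $B$ (each joins two points of $B$, since $P_1,P_2\in D\subset B$), hence $A_3\in B$. This is exactly the content of the paper's remark that $\{P_1,P_2,P_3\}\subseteq D$ iff the $A_i$ lie in a subplane containing $D$.

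Two further steps are deferred to unspecified arguments and do require content. In case (b) your plan needs five points of $K$ in $B$, but you only produce three affine ones, and Corollary~\ref{skcor} gives nothing from three points. The paper closes this with an equation argument: in coordinates adapted to $B$ the degree-two part of $K$ is determined by the conjugate pair $K\cap\ell_\infty$ and therefore has $\GF(q)$-coefficients, and together with three $\GF(q)$-rational points this forces the whole equation over $\GF(q)$. (Equivalently, once $A_1,A_2,A_3\in B$ is known, the Baer involution of $B$ swaps the two conjugate infinite points, so $K$ and its image share five points --- but this uses $A_3\in B$ as input, the opposite of your order of deduction.) Finally, your case (b) reduction of $|B'\cap K|\le 4$ to ``at most two'' for the remaining new lines is only ``anticipated''. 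The clean argument is the uniqueness clause of Theorem~\ref{sk1}(a): every collinear triple of affine points of $K$ on a new line yields a triple of distinct points of $D\setminus K$, each such triple admits exactly one inscribed triangle, and the triangles inscribed in the affine part of the subconic $K\cap B$ already account for all $\binom{|D\setminus K|}{3}$ triples; hence no other new line can carry three points of $K$.
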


\begin{proof}
%Any three (affine) points of $K$ are contained in a unique subplane %containing $D$ in this case,
Let $P_1, P_2,$ and $P_3$ be any three points on $\ell_\infty$, and $A_1, A_2$
and $A_3$ be the affine points on $K$ described in Theorem~\ref{sk1}. Notice
that, $\{P_1,P_2,P_3\} \subseteq  D$ iff all points $A_i$ belong to a subplane
containing $D$. In particular, we can fix three affine points of $K$ contained
in a subplane containing $D$, and they together with the two points at
infinity determine a conic (and this is of course $K$), whose homogeneous part
of degree $2,$ which is determined by the infinite points, can be given
coefficients from $\GF(q)$ and therefore, $K$ intersects this subplane in a
subconic. If the two infinite points belong to $D$, then we find $q-1$
collinear points, if they are conjugate, we find $q+1$ collinear points in the
Hall plane.
\end{proof}

\begin{theorem}\label{resteven}
For $q$ even the following hold.
\begin{enumerate}[(a)]
\item If $K$ is a hyperbola having two non-conjugate points on
$\ell_\infty\setminus D$, or if $K$ is an ellipse, then every line of the Hall
plane intersects the affine part of $K$ in at most $4$ points and the number of
collinear triples is $\binom {q+1}3$.
\item If $K$ is a hyperbola having one point in $D$, then every line of the Hall
plane intersects the affine part of $K$ in at most $3$ points and the number of collinear
triples is $\binom q3$.
\end{enumerate}
\end{theorem}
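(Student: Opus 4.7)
The plan is to reduce the problem to counting intersections of the conic $K$ with Baer subplanes containing $D$, and then to apply the Segre-Korchm\'aros theorem directly.

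First I would separate the Hall lines into two types: ``old'' lines, lines of $\PG(2,q^2)$ whose infinite point lies outside $D$, and ``new'' lines, the affine parts of Baer subplanes $B\supset D$. Old lines meet $K$ in at most two points, so they contribute nothing either to the intersection bound or to the count of collinear triples. The theorem therefore reduces entirely to analysing $|B\cap K|$ as $B$ ranges over Baer subplanes through $D$.

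By Corollary~\ref{skcor} we have $|B\cap K|\le 4$ unless $B\cap K$ is itself a conic of $B$. I would rule out the latter by assuming $B\cap K=K'$ is a subconic of $B$: extending its equation to $\PG(2,q^2)$ recovers $K$ by uniqueness of the conic through five points, and Proposition~\ref{sk2} applied with $r=D$ (which is not tangent to $K'$, since $\ell_\infty$ is not tangent to $K$) forces the two infinite points of $K$ to either both lie in $D$ (if $D$ is secant to $K'$) or to form a Frobenius-conjugate pair over $\GF(q^2)/\GF(q)$ inside $\ell_\infty\setminus D$ (if $D$ is external to $K'$, via the ``not tangent $\Rightarrow$ secant'' clause of Proposition~\ref{sk2} together with irreducibility of a quadratic over $\GF(q)$). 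Neither alternative fits the hypotheses: the ellipse has no infinite points, the hyperbola of (a) has two \emph{non-}conjugate points outside $D$, and the hyperbola of (b) has exactly one infinite point in $D$. Hence $|B\cap K|\le 4$ in every case, which already yields the bound in (a). For (b) I would additionally observe that the unique infinite point of $K$ lying in $D$ is automatically contained in every Baer subplane through $D$, hence is one of the at-most-four intersection points, so at most three affine points of $K$ sit on any new line.

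For the triple count, the key observation is that a triple $\{A_1,A_2,A_3\}$ of affine points of $K$ is collinear in the Hall plane iff all three secants $A_iA_j$ meet $\ell_\infty$ in points of $D$; those three points are automatically distinct (the $A_i$ are not collinear in $\PG(2,q^2)$) and external to $K$ (a line meets a conic in at most two points). Theorem~\ref{sk1}(a) then provides, for $q$ even, a bijection between unordered triples in $\ell_\infty\setminus K$ and inscribed triangles of $K$ realising them as side-intersections; restricting this bijection to triples lying inside $D\setminus K$ yields exactly $\binom{|D\setminus K|}{3}$ Hall-collinear triples. Since $|D\cap K|=0$ in case (a) and $|D\cap K|=1$ in case (b), this evaluates to $\binom{q+1}{3}$ and $\binom{q}{3}$ respectively. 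The main difficulty I anticipate is the verification that $K$ cannot lie in any Baer subplane through $D$: one must combine Proposition~\ref{sk2} with the Frobenius behaviour of the infinite points carefully across all three sub-configurations.
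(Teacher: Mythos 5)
Your proof is correct and takes essentially the same route as the paper's: Corollary~\ref{skcor} (with Proposition~\ref{sk2} used to exclude the subconic case) gives the intersection bounds, and the bijection from Theorem~\ref{sk1}(a) between triples of $D\setminus K$ and inscribed triangles gives the triple counts. You simply make explicit several verifications (old versus new lines, the ``collinear iff all three secant directions lie in $D$'' equivalence, and the Frobenius argument ruling out a subconic) that the paper's two-sentence proof leaves implicit.
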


\begin{proof}
By Corollary \ref{skcor} the lines intersect $K$ in at most $4$ points,
and if $K$ has a point in $D$ then at most $3$, since
in this case $K$ does not intersect a Baer subplane containing $D$ in a conic. If $K$ has one point in $D$, then from the remaining $q$ points we get $\binom q3$
triples, and by  {Theorem}~\ref{sk1} the same number of triples in the intersection of $K$ with
a subplane containing $D$, otherwise $K$ has no points in $D$ and we find
$\binom{q+1}3$ such triples.
\end{proof}

%\vskip 6pt

Next we consider the case that $q$ is odd. In this case we have the following possibilities:

\begin{enumerate}[(1)]
\item All points of $D\setminus K$ are internal. Now we get from  {Proposition}~\ref{skodd}
that there are no collinear
triples, so we get an inherited arc.
\item $D$ contains $s>0$ external points. In this case we have roughly, but
definitely at least
$\binom s3+s\binom{q-1-s}2$ collinear triples, so certainly $K$ does not
give rise to an arc.
\end{enumerate}

In the next section we will investigate the possible values of $s$.

\section{External points in the derivation set}

We consider the case that $q$ is odd and want to
determine the number of external/internal
points of the conic in the derivation set.

The line at infinity is the line with equation %$z=0$
${Z}=0$. $D$ is the standard derivation set defined above. The conic $K$ is
given by {$Q(X,Y,Z)=X^2+aXY+bY^2+Z L(X,Y,Z)=0$,} or just by
{$X^2+aXY+bY^2+L(X,Y)$,} where of course {$L(X,Y)=L(X,Y,1)$.} Note that $K$ is
an ellipse if {$f=X^2+aXY+bY^2$} is irreducible over $\GF(q^2)$, a parabola if
$f$ is a square, and a hyperbola if $f$ factors into different linear factors.
For convenience we take $L$ so that the point $(1:0:0)$ is external, and now the
infinite point $(u):=(1:u:0)$ is external/internal when
$1+au+bu^2=$\,\raisebox{-.6pt}{$\Box$}~  or\raisebox{-.6pt}{$~\not\!\Box$}.

Remark: it is an exercise to show that if $P_1$ and $P_2$ are two (external)
points on the same tangent, then either $Q(P_i)$ is a square for both points, or
a non-square. As a consequence $Q(P)$ either is a square for all external points
$P$, or a non-square. This is essentially Theorem 8.17 in \cite{Hir}.

To count the number of external/internal points in $D$, we therefore have to find the number
of (affine) rational points
(so $u,w\in\GF(q)$) on the curve $\mathcal C$ with equation
\[
(1+au+bu^2)(1+\bar a u+\bar b u^2)-w^2=p(u)-w^2=0.
\]
This curve is absolutely irreducible unless the polynomial $p$ is a square. One possibility for this
is that $1+au+bu^2$ is a square, in which case the conic is a parabola. The line at infinity is a tangent
in this case, so we have:

\begin{proposition}\label{para}
If $K$ is a parabola then all points in $D$ different from the infinite point of $K$
are external.
\end{proposition}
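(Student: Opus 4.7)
The plan is to exploit the defining feature of a parabola: the line at infinity $\ell_\infty$ is the tangent of $K$ at its unique point at infinity $T:=K\cap\ell_\infty$. Once this is noted, a general fact about conics in odd characteristic finishes the proof: on a tangent line, no point other than the tangent point can be internal (an internal point meets no tangent) or lie on the conic (by tangency), so every point of $\ell_\infty\setminus\{T\}$, and in particular every point of $D\setminus\{T\}$, must be external.

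Concretely, I would first use the characterization of a parabola to write the degree-two homogeneous part of the equation as
\[
f(X,Y)=X^2+aXY+bY^2=(X+cY)^2,\qquad c\in\GF(q),
\]
so that $T$ is cut out by $X+cY=0$ on $\ell_\infty$. For $u\in\GF(q)$ one then has $Q(1,u,0)=1+au+bu^2=(1+cu)^2$, which is either $0$—precisely when $(1:u:0)=T$—or a nonzero square. By the square/nonsquare convention fixed in the paragraph immediately above the statement (anchored by $Q(1,0,0)=1$ being a square), ``nonzero square'' is exactly the criterion for being external. The remaining point of $D$, namely $(0:1:0)$, is treated identically: $Q(0,1,0)=b=c^2$ vanishes only in the degenerate case $c=0$ (in which $T=(0:1:0)$) and is a nonzero square otherwise.

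The statement follows immediately from either the conceptual or the computational route. There is no substantial obstacle; the only care needed is bookkeeping, namely to verify that the ``external iff $Q$ is a nonzero square'' convention applies unambiguously here (it does, since $(1+cu)^2$ is manifestly a square whenever nonzero) and to handle the degenerate sub-case $c=0$, where one checks that the point of $D$ excluded by the formula is exactly the infinite point of $K$.
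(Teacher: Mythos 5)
Your proof is correct and matches the paper's own (implicit) argument: the paper likewise notes that for a parabola the quadratic part $1+au+bu^2$ is the square of a linear polynomial, so $Q(1,u,0)$ is a nonzero square---hence the point is external---except at the infinite point of $K$; your tangent-line formulation is just the geometric restatement the paper alludes to with ``the line at infinity is a tangent in this case.'' The only slip is that $c$ lies in $\GF(q^2)$, not necessarily in $\GF(q)$ (the infinite point of $K$ need not belong to $D$, in which case no point of $D$ is excluded), but this does not affect the argument since $(1+cu)^2$ is still a nonzero square in $\GF(q^2)$ whenever $1+cu\neq 0$.
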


The other possibility if $p$ is a square, is that $1+au+bu^2=1+\bar a u+\bar b
u^2$ and now $a,b\in\GF(q)$, so $1+au+bu^2$ factors over $\GF(q^2)$. In this
case the conic has two points at infinity so we have a hyperbola, and we have:

\begin{proposition}\label{hyp0}
If $K$ is a hyperbola and either both infinite points belong to $D$, or they
are conjugates, $(m)$ and $(\bar m)$, both outside $D$, then either all (other) points of $D$ are external,
or all are internal.
\end{proposition}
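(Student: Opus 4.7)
The plan is to observe that both geometric hypotheses force the coefficients of the homogeneous quadratic part of $Q$ to lie in $\GF(q)$, which makes the polynomial $p(u)$ a perfect square and so degenerates the curve $\mathcal{C}$.

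First I would verify that $a,b\in\GF(q)$ in both sub-cases. In either case, the (unordered) pair of infinite points of $K$ is Frobenius-invariant: if both lie in $D$, then each is individually Frobenius-fixed, and in the conjugate-pair case the Frobenius swaps them. Writing $1+au+bu^2=b(u-m_1)(u-m_2)$, the elementary symmetric functions $m_1+m_2$ and $m_1m_2$ therefore belong to $\GF(q)$, and Vieta gives $a,b\in\GF(q)$. (The exceptional sub-case where one infinite point is $(0:1:0)$ simply forces $b=0$, with the same conclusion for $a$.)

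Substituting $\bar{a}=a$ and $\bar{b}=b$ into the defining polynomial of $\mathcal{C}$ then yields
\[
p(u)=(1+au+bu^2)(1+\bar{a}u+\bar{b}u^2)=(1+au+bu^2)^2,
\]
a perfect square in $\GF(q)[u]$. For any $u\in\GF(q)$ with $1+au+bu^2\neq 0$ --- equivalently, for any point of $D$ distinct from the two infinite points of $K$ --- the value $p(u)$ is a nonzero square in $\GF(q)$ with explicit square root $w=1+au+bu^2\in\GF(q)$. By the criterion for external points recalled just before the proposition (external iff $p(u)$ is a square in $\GF(q)$, via the norm map $\GF(q^2)^*\to\GF(q)^*$), the point $(1:u:0)$ is external; hence every point of $D\setminus K$ is external, which is in particular the uniformity the proposition claims.

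There is no substantial obstacle here: the one place requiring care is the passage from the two geometric hypotheses to the single algebraic condition $a,b\in\GF(q)$. Once this is in place, the factorisation $p=(1+au+bu^2)^2$ and the square/non-square criterion make the conclusion automatic; indeed the argument delivers the stronger statement that all relevant points of $D$ are external.
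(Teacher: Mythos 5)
Your main computation coincides with the paper's: in both sub-cases the unordered pair of infinite points is Frobenius-stable, hence $a,b\in\GF(q)$, hence $p(u)=(1+au+bu^2)^2$ and $p(u)$ is a nonzero square in $\GF(q)$ at every point of $D$ off $K$. That part, including the Vieta argument and the $(0:1:0)$ sub-case, is fine and is exactly what the paper does.

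The problem is the last step, and specifically your claimed ``stronger statement'' is false. The criterion ``$(u)$ is external iff $p(u)$ is a square'' presupposes the paper's convention that $(1:0:0)$ is an external point of $K$; for a given $K$ and the standard $D$ this is a genuine hypothesis, not a normalization you can impose (rescaling $Q$ destroys the condition that the $X^2$-coefficient is $1$, and a projectivity moves $D$). The intrinsic fact --- the remark preceding the proposition, i.e.\ Theorem 8.17 in Hirschfeld --- is only that the quadratic character of $Q(P)$, equivalently of $p(u)=N(Q(1:u:0))$, is constant on external points and takes the opposite value on internal points; which value corresponds to ``external'' depends on $K$. So from ``$p(u)$ is always a nonzero square'' you may conclude exactly the dichotomy in the statement, namely that all points of $D\setminus K$ have the same type, and nothing more. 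Your stronger claim is contradicted by the hyperbola $XY=-d$ with $d$ a non-square in $\GF(q^2)$: its infinite points $(1:0:0)$ and $(0:1:0)$ lie in $D$, yet all other points of $D$ are internal --- this is precisely the case producing the complete $(q^2-1)$-arc in the O'Keefe--Pascasio theorem quoted later, which could not exist if all those points were external. Drop the final sentence and conclude only the uniformity; the proof is then correct and identical in substance to the paper's.
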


If $p$ is not a square, then we first take care of the case that $p$ has a
repeated factor. If $1+au+bu^2=(1-\alpha u)(1-\beta u)$, then $1+\bar a
u+\bar b u^2=(1-\bar\alpha u)(1-\bar\beta u)$ and if now $\alpha=\bar \beta$
then $\beta=\bar\alpha$, so $p$ is a square, and we are back in the {case of a
hyperbola with conjugate infinite points,} while if $\alpha=\bar\alpha$ but
$\beta\ne\bar\beta$ then, $K$ has one point in $D$, namely $(\alpha:1:0)$ and
one outside $D$ namely $(\beta:1:0)$, and we now look for the number of points
on the curve
\[
(1+\alpha u)^2(1+\beta u)(1+\bar\beta u)-w^2,
\]
and this is essentially a conic, possibly with some points at infinity.

\begin{proposition}\label{hyp1}
If $K$ is a hyperbola with exactly one infinite point in $D$, then $D$ contains $(q+1)/2$ external
and $(q-1)/2$ internal points, or the other way around, depending on the quadratic character of $\beta\bar\beta$
in $\GF(q)$.
\end{proposition}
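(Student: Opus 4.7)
The plan is to convert the count of external points of $D$ into the number of $\GF(q)$-rational points on the auxiliary conic $\mathcal{E}\colon W^2 = g(u)$, where $g(u) = (1+\beta u)(1+\bar\beta u)$, as foreshadowed in the paragraph preceding the statement. Indeed, the substitution $w = (1+\alpha u)W$ turns $(1+\alpha u)^2(1+\beta u)(1+\bar\beta u) - w^2 = 0$ into $W^2 = g(u)$ on the open set $u \ne -1/\alpha$; and for such $u$, the point $(1:u:0)\in D$ is external precisely when $g(u)$ is a nonzero square in $\GF(q)$, since $(1+\alpha u)^2$ is already a nonzero square.

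First I would verify that $\mathcal{E}$ is a smooth plane conic defined over $\GF(q)$: the symmetric functions $\beta+\bar\beta$ and $\beta\bar\beta$ lie in $\GF(q)$, and the discriminant $(\beta-\bar\beta)^2$ is nonzero since $\beta\notin\GF(q)$. Hence the projective closure of $\mathcal{E}$ in $\PG(2,q)$ has exactly $q+1$ rational points. Moreover $g(u)$ is the norm from $\GF(q^2)$ to $\GF(q)$ of $1+\beta u$, and is therefore nonzero for every $u\in\GF(q)$; so each $u$-value producing a rational point of $\mathcal{E}$ in fact supplies two of them, namely $(u,\pm W)$. The points at infinity of $\mathcal{E}$ satisfy $W^2 = \beta\bar\beta\, u^2$, giving two $\GF(q)$-rational solutions exactly when $\beta\bar\beta$ is a square in $\GF(q)$ and none otherwise. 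Halving the affine count, the number of $u\in\GF(q)$ with $g(u)$ a (nonzero) square equals $(q-1)/2$ in the square case and $(q+1)/2$ in the non-square case.

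Finally, I would translate this $u$-count into a count of points in $D$: remove the exceptional value $u=-1/\alpha$ (which corresponds to the unique point of $K\cap D$) and include the point $(0:1:0)\in D$, whose external/internal status is controlled by whether $b=\alpha\beta$ is a square in $\GF(q^2)$, equivalently, via the norm, by the quadratic character of $\beta\bar\beta$ in $\GF(q)$. I expect the main obstacle to be precisely this last piece of bookkeeping: one must check that the correction from the exceptional $u$-value together with the behaviour of $(0:1:0)$ exactly produces the split $((q+1)/2,(q-1)/2)$ or $((q-1)/2,(q+1)/2)$ asserted in the proposition, with the dichotomy governed by the character of $\beta\bar\beta$.
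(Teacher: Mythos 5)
Your reduction to the conic $\mathcal{E}\colon W^2=g(u)$ and the resulting count $(q-\chi(\beta\bar\beta))/2$ for the number of $u\in \GF(q)$ with $g(u)$ a nonzero square (here $\chi$ is the quadratic character of $\GF(q)$) is exactly the route the paper takes --- the paper offers little more than the remark that the quartic ``is essentially a conic, possibly with some points at infinity'' --- and this part of your argument is correct. The genuine gap is the final bookkeeping, which you explicitly defer, and it does not close the way you anticipate. Writing $[\,\cdot\,]$ for the indicator and $u_0$ for the exceptional parameter value (so $1+\alpha u_0=0$), the number of external points of $D$ is
\[
E=\frac{q-\chi(\beta\bar\beta)}{2}-[\chi(g(u_0))=1]+[\chi(\beta\bar\beta)=1].
\]
The third term cancels the $\chi(\beta\bar\beta)$-dependence of the first exactly (both signs give $(q+1)/2$), leaving $E=\tfrac{q+1}{2}-[\chi(g(u_0))=1]$, where $g(u_0)=(\alpha-\beta)(\alpha-\bar\beta)/\alpha^2$ is a nonzero norm. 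So the split $(q\pm1)/2$ does follow from your plan, but the dichotomy is governed by the quadratic character of $(\alpha-\beta)(\alpha-\bar\beta)$ in $\GF(q)$ --- equivalently, by whether $\alpha-\beta$ is a square in $\GF(q^2)$ --- and not by $\chi(\beta\bar\beta)$.

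This is not something more careful accounting can repair: the criterion as stated in the proposition appears to be false. For $q=3$, $i^2=-1$, $\beta=1+i$ (so $\beta\bar\beta=-1$ is a nonsquare in $\GF(3)$ in both cases), the choice $\alpha=1$ yields one external point of $D$ while $\alpha=-1$ yields two; this can be checked directly from the values of $1+au+bu^2$ at $u=0,1,2$ and of $b$ at $(0:1:0)$, using the normalisation that $(1:0:0)$ is external. You should therefore either prove only the quantitative statement (the $(q\pm1)/2$ split, which your approach does deliver once you add the observation that $g(u_0)\neq 0$ and track its character), or replace the criterion by the character of $\alpha-\beta$ in $\GF(q^2)$. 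One small step you would still need to write out in either case is the claim about $(0:1:0)$: it is external iff $b=\alpha\beta$ is a square in $\GF(q^2)$ iff $\chi(\beta\bar\beta)=1$, which uses that every element of $\GF(q)^*$, in particular $\alpha$, is a square in $\GF(q^2)$.
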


So in the case of an ellipse, or a hyperbola with
two non-conjugate points outside $D$ we have no repeated factor, and now by \cite[Example 5.59]{HKT},
$\mathcal{C}$ has genus $g=1$.
Let $R_q$ denote the number of points $P\in \mathcal{C}$ that lie in $\PG(2,q)$. On the one hand, \cite[Theorem 9.57(i)]{HKT} implies
\[|R_q-(q+1)|\leq 2\sqrt{q}+2.\]
On the other hand, $\mathcal{C}$ has a unique point at infinity and all $\GF(q)$-rational affine points $\mathcal{C}$ have the form $(u,\pm w)$ with $w\neq 0$. That is, for $(R_q-1)/2$ values $u\in \GF(q)$, $p(u)$ is a square.
We get:

\begin{proposition}\label{ellhyp}
If $K$ is an ellipse, or a hyperbola with two non-conjugate infinite points
outside $D$, then the number of internal
(external) points on $D$ is at least $q/2-1-\sqrt{q}$ (at most $q/2+1+\sqrt{q}$).
\end{proposition}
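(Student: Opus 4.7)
The plan is to apply the Hasse--Weil bound to the plane curve $\mathcal{C}\colon p(u)-w^2=0$ set up immediately above the proposition, and then read off the external count from the $\GF(q)$-rational points of $\mathcal{C}$.

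First I would verify that in both cases $p(u)=(1+au+bu^2)(1+\bar a u+\bar b u^2)$ has no repeated factor. For the ellipse, $1+au+bu^2$ is irreducible over $\GF(q^2)$; if its conjugate shared a factor with it we would need $a=\bar a$ and $b=\bar b$, forcing $a,b\in\GF(q)$ and hence (for $q$ odd) the factor $1+au+bu^2$ to already factor over $\GF(q^2)$, a contradiction. For the hyperbola with two non-conjugate points outside $D$, any coincidence among the four roots of $p$ in $\GF(q^2)$ would force a root either to be fixed by the Frobenius (i.e.\ to lie in $\GF(q)$, which would place an infinite point of $K$ inside $D$) or to be Frobenius-swapped with the other root of its own factor (making the two infinite points conjugate); both are excluded. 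Combined with the discussion above the proposition and \cite[Example~5.59]{HKT}, this shows that $\mathcal{C}$ is absolutely irreducible of genus $1$, and \cite[Theorem~9.57(i)]{HKT} then yields $|R_q-(q+1)|\le 2\sqrt q+2$.

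Next I would convert the bound on $R_q$ into a bound on external points. The curve $\mathcal{C}$ has a unique point at infinity, and $p(u)\ne 0$ for every $u\in\GF(q)$: in the ellipse case because $1+au+bu^2$ has no root in $\GF(q^2)\supseteq\GF(q)$, and in the hyperbola case because its two roots lie in $\GF(q^2)\setminus\GF(q)$. Therefore the affine $\GF(q)$-rational points of $\mathcal{C}$ pair up as $(u,\pm w)$ with $w\ne 0$, so the number of $u\in\GF(q)$ with $p(u)$ a nonzero square in $\GF(q)$ equals $(R_q-1)/2$. By the norm/square dictionary for $q$ odd---an element $c\in\GF(q^2)^*$ is a square in $\GF(q^2)^*$ if and only if $c\bar c$ is a square in $\GF(q)^*$---these are exactly the $u\in\GF(q)$ for which $1+au+bu^2$ is a square in $\GF(q^2)$, i.e.\ for which $(1:u:0)$ is external to $K$.

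Combining the two steps, the external count among the $q$ affine points $(1:u:0)$ of $D$ equals $(R_q-1)/2$, which by Hasse--Weil lies in $[q/2-1-\sqrt q,\,q/2+1+\sqrt q]$; the internal count is the complement within these $q$ points and lies in the same interval, giving both stated inequalities at once. The substantive obstacle is really the square-freeness check in the first step, since from there everything is forced by the framework set up before the proposition; matching the small integer constants in the Hasse--Weil estimate against the claimed bounds is minor bookkeeping.
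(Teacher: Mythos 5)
Your proposal is correct and follows essentially the same route as the paper: checking that $p(u)$ is square-free in these two cases, invoking the genus-$1$ Hasse--Weil bound $|R_q-(q+1)|\le 2\sqrt q+2$ for $\mathcal{C}\colon p(u)=w^2$, and identifying the external points $(1:u:0)$ with the $(R_q-1)/2$ values of $u$ for which $p(u)$ is a nonzero square. The extra details you supply (separability of the factors, $p(u)\ne 0$ on $\GF(q)$, and the norm/square dictionary) are exactly the steps the paper leaves implicit.
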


\section{Inherited parabolas}

The complete solution to the problem of inherited parabolas was given in a sequence of papers. The story began with the results of Korchm\'aros {\cite[Theorem 1 and 2]{K2}}.

\begin{theorem}
Let $K$ be a parabola in ${\rm PG}(2,q)$ where $q$ is odd. If $K$ is an arc in a
translation plane having the same translation group as the Desarguesian plane,
then the plane must be the Desarguesian one. For $q$ even, there is a parabola
which remains an arc in the Hall plane obtained by derivation.
\end{theorem}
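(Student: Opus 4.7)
The plan is to handle the two cases separately, with the odd case relying on the axiom of Pasch and the even case on an explicit construction.

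For $q$ odd, my plan is to argue by contradiction. Assume $\pi$ is a translation plane with the same translation group as $\PG(2,q)$, distinct from $\PG(2,q)$ itself, and that $K$ remains an arc in $\pi$. Two ingredients drive the argument: first, by Proposition~\ref{para}, since $K$ is a parabola the line $\ell_\infty$ is tangent to $K$ and all remaining $q$ points of $\ell_\infty$ are external to $K$; second, by Proposition~\ref{skodd} every triple of external points on a non-tangent line is realised as the sides-through points of exactly two triangles inscribed in $K$. Since $\pi$ is not Desarguesian, at least one of its parallel classes is ``new'', meaning some line $\lambda$ of $\pi$ is a $q$-set of affine points not collinear in $\PG(2,q)$; the three secants $A_iA_j$ of $\PG(2,q)$ spanned by any three affine points $A_1,A_2,A_3\in\lambda$ then cut $\ell_\infty$ in an ordered triple of external points. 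I would combine the abundance of external points on $\ell_\infty$ with the Pasch count from Proposition~\ref{skodd} to prove, via an averaging argument over the lines of a new parallel class, that some such $\lambda$ must contain the three vertices of an inscribed triangle of $K$. This produces a collinear triple of points of $K$ in $\pi$, contradicting the arc property.

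For $q$ even the plan is to exhibit an explicit parabola $K$ in $\PG(2,q^2)$ that remains an arc in $\Hall(q^2)$. Since the old lines of the Hall plane coincide with lines of $\PG(2,q^2)$, we only need to control the new lines, which are the affine parts of the Baer subplanes $B$ containing the derivation set $D$. By Corollary~\ref{skcor}, for every such $B$ either $|B\cap K|\le 4$ or $B\cap K$ is a subconic of $B$. To secure an arc, I would select $K$ so that $|B\cap K|\le 2$ for every Baer subplane $B\supset D$: the subconic case is excluded by choosing $K$ whose equation is not $\GF(q)$-rational in any coordinate system compatible with $D$, and the sporadic intersections of size $3$ or $4$ are controlled using Proposition~\ref{sk1}(a), which in characteristic $2$ attaches a unique inscribed triangle to each triple of external points on a non-tangent line and thus converts the arc condition into a tractable polynomial condition on the coefficients of $K$.

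The hard part in the odd case is the bridge between the purely geometric Pasch triangle and an actual collinearity in $\pi$: one must show that the triangle supplied by Proposition~\ref{skodd} has its three vertices on a single new line of $\pi$, not merely that such triangles exist somewhere. For the Hall plane this is nearly automatic because the new lines are affine parts of Baer subplanes and Corollary~\ref{skcor} forces large intersections with $K$, but for an arbitrary translation plane with the same translation group one has to exploit the spread defining $\pi$ and track the $q$-set structure of the replaced parallel classes. The main obstacle in the even case is the explicit construction itself: producing a parabola for which every Baer subplane over $D$ meets $K$ in at most two points is a strong restriction, and verifying it rigorously requires a concrete algebraic choice and careful case analysis.
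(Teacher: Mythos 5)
First, a point of reference: the paper does not prove this statement at all --- it is quoted as a known result of Korchm\'aros \cite[Theorem 1 and 2]{K2} (with the even-order construction elaborated in \cite{O'KPP}), so there is no internal proof to compare yours against. Judged on its own, your proposal is a programme rather than a proof, and both halves have genuine gaps. In the odd case the entire content of the argument is hidden in the phrase ``averaging argument over the lines of a new parallel class'', and averaging cannot work here: each non-vertical difference of two points of the parabola $Y=X^2$ occurs exactly once among the nonzero vectors of $\AG(2,q)$, so if every line of $\pi$ met $K$ in at most two points, each parallel class would carry essentially $(q-1)/2$ two-secants and the identity $\sum a_2 = \binom{q}{2}$ would hold automatically --- these are exactly the Desarguesian values, and no contradiction falls out of counting incidences or pairs. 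What actually has to be shown is that a non-Desarguesian component, i.e.\ an additive subgroup $S=\{(x,f(x)) : x\in\GF(q)\}$ with $f$ additive but not $\GF(q)$-linear, has some coset meeting $Y=X^2$ in three points; equivalently, that $x\mapsto x^2-f(x)$ is somewhere three-to-one. Your sketch never engages with this, which is the whole theorem. A further slip: Proposition \ref{skodd} is stated only for lines that are \emph{not} tangent to $K$, whereas for a parabola $\ell_\infty$ is a tangent; the relevant tool is the final sentence of Theorem \ref{sk1}(b).

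The even half has the same defect in sharper form. You invoke Theorem \ref{sk1}(a) for triples of points of $D\subset\ell_\infty$, but part (a) also excludes tangent lines, and $\ell_\infty$ is tangent to the parabola. On a tangent line in characteristic $2$ the behaviour is completely different: the secant of $Y=X^2$ through $(a,a^2)$ and $(b,b^2)$ has slope $a+b$, so the three side-directions of any inscribed triangle sum to zero, and a given triple of directions supports either no inscribed triangle or $q$ of them. This degeneration is precisely why an inherited parabola can remain an arc for $q$ even, but your argument does not see it, and in any case you exhibit no explicit parabola and verify nothing; the construction that works is the one quoted later in the paper from O'Keefe--Pascasio--Penttila (infinite point outside $D$, nucleus inside $D$, or vice versa). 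As it stands, neither direction of the theorem is established.
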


In the case $q$ odd more information is given about parabolas as subsets of the
Hall plane in the paper \cite{SzTJG}. Namely, it is shown that they are sets
having an internal nucleus set that is much larger than a subset of the
Desarguesian plane can have ($P\in S$ is an {\em internal nucleus} if every line
through $P$ contains at most one other point of $S$ \cite{W}). This happens in
the case when the infinite point of the parabola belongs to the derivation set.

If the infinite point of $K$ is not in $D$, then we can use Theorem \ref{sk1},
which gives that for any $\{P_1,P_2,P_3\}\subseteq D$ there are $A_1,A_2,A_3\in
K$ that are collinear in Hall($q^2$). By Proposition \ref{sk2} and Corollary
\ref{skcor} it also follows that every new line intersects $K$ in at most 4
points.
%Moreover the number of collinear triples is $\binom{q+1}3$.

\begin{lemma}
Let $q$ be odd, and let $K$ be a parabola whose infinite point does not
belong to $D$. Then every line of ${\rm Hall}(q^2)$ meets $K$ in at most four points.
\end{lemma}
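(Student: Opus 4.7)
The plan is to split the lines of $\Hall(q^2)$ into the \emph{old} lines (lines of $\PG(2,q^2)$ whose point at infinity lies outside $D$) and the \emph{new} lines (Baer subplanes of $\PG(2,q^2)$ containing $D$). Old lines meet $K$ in at most two points because $K$ is an irreducible conic of $\PG(2,q^2)$, so the real content of the lemma concerns the new lines.

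Fix a new line $B$, hence a Baer subplane of $\PG(2,q^2)$ with $B\cap\ell_\infty=D$. By Corollary~\ref{skcor}, either $|B\cap K|\le 4$ or $K_0:=B\cap K$ is a conic of $B$; it suffices to rule out the second alternative. For $q=3$ both possibilities already yield $|K_0|\le 4$, so I may assume $q\ge 5$ and, for contradiction, that $K_0$ is a conic of $B$. Then $|K_0|=q+1\ge 6$, and these points determine a unique conic of $\PG(2,q^2)$, which must be $K$ itself. Consequently, in a frame adapted to the Baer subplane $B$, a defining $\GF(q)$-equation of $K_0$ becomes a defining equation of $K$ over $\GF(q^2)$; in other words, $K_0$ and $K$ are related by the ``same-equation'' extension considered in Proposition~\ref{sk2}.

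Now apply Proposition~\ref{sk2} with $r:=D$, whose equation-preserving extension is $r'=\ell_\infty$. Since $K$ is a parabola, $\ell_\infty$ is tangent to $K$, and the proposition then forces $D$ to be tangent to $K_0$ inside $B$ (if $D$ were secant or external to $K_0$, Proposition~\ref{sk2} would make $\ell_\infty$ a secant of $K$). Its unique contact point must then coincide with the unique intersection of $\ell_\infty$ with $K$, which is the infinite point of the parabola. By hypothesis this point does not lie in $D$, a contradiction. Therefore $|B\cap K|\le 4$, as required.

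The main obstacle is the identification of $K$ with the equation-preserving extension of $K_0$ to $\PG(2,q^2)$. Once this identification is justified (via five points determining a conic), Proposition~\ref{sk2} translates the known tangency of $\ell_\infty$ to $K$ into a tangency of $D$ to $K_0$ in $B$, which immediately clashes with the hypothesis on the infinite point of $K$; everything else is bookkeeping on old versus new lines.
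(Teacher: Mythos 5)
Your proof is correct and follows essentially the same route as the paper: old lines are secants of the conic, and a new line (Baer subplane on $D$) meeting $K$ in five or more points would force $B\cap K$ to be a subconic whose equation-preserving extension is $K$, which is impossible because the infinite point of the parabola lies outside $D$. The paper's proof is just a compressed version of this; your use of Proposition~\ref{sk2} to derive the tangency contradiction (and the separate remark for $q=3$) merely fills in the details the paper leaves implicit.
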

\begin{proof}
Consider a new line of the affine Hall plane $\Hall(q^2)$. This Baer subplane cannot meet $K$ {in} a subconic, because the infinite point of $K$ does not belong to $D$. Five points in a Baer subplane determine a subconic, hence the Baer subplane can meet $K$ in at most four points.
\end{proof}

%\vskip 6pt

Moreover the number of collinear triples is $\binom{q+1}3$. Counting collinear
triples in the Hall plane we get $a_3+4a_4=\binom{q+1}3$, where $a_i$ denotes
the number of lines meeting $K$ in $i$ points. We prove below that the number of
lines in the Hall plane interesecting $K$ in exactly $3$ points does not depend
on the choice of $K$.

Let $K'$ be another parabola with $D \cap K' \ne \emptyset$. There is a
projectivity $\varphi$ that maps $K'$ to $K$ and the infinite point of $K'$ to
the infinite point of $K$. Then $\varphi$ maps $\ell_\infty$ to itself and $D$
to another Baer subline, say $r$. The Baer subplanes containing $D$ are mapped
to the Baer subplanes containing $r$. It is enough to show that there is a
projectivity $\psi$ which fixes $K$ and maps $r$ to $D$ because then the product
$\psi \varphi$ will map the $3$-secant new lines to $K'$ to the $3$-secant new
lines to $K$. Denote by $I$ the infinite point of $K$. Let $G$ be the group of
projectivities fixing $K,$ and $H$ be the stabilizer of $I$ in $G$. The group $G
\cong \PGL(2,q^2),$ which is sharply $3$-transitive on the points of $K$. Thus
$H$ is sharply $2$-transitive on $K \setminus \{I\},$ implying that it acts
doubly transitively on the tangents of $K$ distinct from $\ell_{\infty},$ and
hence also on the points in $\ell_\infty \setminus \{I\}$. When we identify
$\ell_\infty\setminus \{I\}$ with $\GF(q^2)$, then $H$ acts as the set of maps
$z\mapsto az+b$, $a\in GF(q^2)^*$, $b\in \GF(q^2)$ and Baer subplanes not
containing $I$ are circles $(z-c)(\bar z-\bar c)=r$ so we see that $H$ contains
a projectivity $\psi$ that maps the first pair to the second.  Clearly, $\psi$
will map $r$ to $D,$ and by this we showed that the number of lines in the Hall
plane interesecting $K$ in exactly $3$ points does not depend on the choice of
$K$.

 \begin{lemma}\label{l:3-secant}
Let $q$ be odd, and $P_1, \, P_2$ and $P_3$ be three affine
points on a new line $\ell$ of the Hall plane. Then there are
exactly $3(q-1)$ parabolas whose infinite points are not in $D$ and which intersect $\ell$
in exactly $P_1, \, P_2$ and $P_3$.
\end{lemma}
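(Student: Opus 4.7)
The plan is first to fix convenient coordinates via transitivity of the Hall-plane collineation group on ordered triples of points of a new line that are non-collinear in $\PG(2, q^2)$, reducing to $P_1 = (0,0)$, $P_2 = (1, 0)$, $P_3 = (0, 1)$ on the standard Baer subplane $B_0 = \PG(2, q)$; the case of three points collinear in $\PG(2, q^2)$ contributes $0$ trivially (no irreducible conic passes through) and does not arise in the intended application. Every conic through these points has affine equation
\[
ax(x-1) + bxy + cy(y-1) = 0, \quad [a:b:c] \in \PG(2, q^2),
\]
and the parabola locus $b^2 = 4ac$ is rationally parameterized by $(a, b, c) = (\alpha^2, 2\alpha\gamma, \gamma^2)$, $[\alpha:\gamma] \in \PG(1, q^2)$, with infinite point $I = (\gamma : -\alpha : 0)$. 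The three degenerate members all fall inside $\PG(1, q)$, so the $q^2 - q$ parameters in $\PG(1, q^2) \setminus \PG(1, q)$ parameterize exactly the irreducible parabolas through $\{P_1, P_2, P_3\}$ with $I \notin D$.

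Next I would subtract the ``bad'' parabolas with $|K \cap \ell| = 4$. Since $K$ is not defined over $\GF(q)$, Corollary~\ref{skcor} rules out the subconic case, and a bad $K$ contains a unique extra affine intersection $P_4 = (x, y) \in \GF(q)^2 \setminus \{P_1, P_2, P_3\}$. Substituting $P_4$ yields a quadratic in $[\alpha : \gamma]$ whose discriminant is, up to a square factor, $xy(x+y-1)$. When $P_4$ lies on a side of the triangle the quadratic degenerates and forces one of the three degenerate parabolas (all with $I \in D$), contributing nothing; otherwise the two roots lie in $\PG(1, q^2) \setminus \PG(1, q)$ exactly when $xy(x+y-1)$ is a non-square in $\GF(q)$. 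Thus the number of bad parabolas equals $2 N_{\mathrm{NS}}$, where $N_{\mathrm{NS}}$ counts $(x, y) \in \GF(q)^2$ with $xy(x+y-1)$ a non-zero non-square.

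The hard part is evaluating $N_{\mathrm{NS}}$. I would write $N_{\mathrm{NS}} = \tfrac12 (N - S)$ with $N = q^2 - 3q + 3$ (inclusion--exclusion) and
\[
S = \sum_{\substack{x, y \in \GF(q)^* \\ x + y \neq 1}} \chi(x)\chi(y)\chi(x + y - 1),
\]
then evaluate the inner sum over $y$ via the substitution $c = 1 - x$ and the standard identity $\sum_{t \in \GF(q)} \chi(t(t - c)) = -1$, valid for $c \neq 0$. The inner sum equals $-1$ except when $x = 1$ (where it is $q - 1$); combining with $\sum_{x \neq 0} \chi(x) = 0$ gives $S = q$, whence $N_{\mathrm{NS}} = \tfrac12 (q-1)(q-3)$ and the bad count is $(q-1)(q-3)$. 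Subtracting from $q^2 - q$ yields $3(q - 1)$, as required.
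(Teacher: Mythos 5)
Your proof is correct, and after the common normalization step it diverges from the paper's argument in an interesting way. Both proofs begin identically: an affine map $(x,y)\mapsto\lambda^{-1}(x-a_1,y-b_1)A$ with $A\in\mathrm{GL}(2,q)$ carries $\ell$ to the standard Baer subplane and the three points to a fixed triangle while stabilizing $\ell_\infty$ and $D$ (your appeal to ``transitivity of the collineation group'' is exactly this construction, and your explicit caveat about triples collinear in $\PG(2,q^2)$ is a point the paper leaves implicit). From there the paper parametrizes the pencil by the infinite point $(-u:1:0)$, $u\in\GF(q^2)\setminus\GF(q)$, writes down the unique parabola $K_u$ through the three points, and \emph{explicitly solves} for the fourth $\GF(q)$-rational intersection point $P=P(u)$, which always exists; the count $3(q-1)$ then falls out by determining for which $u$ this $P$ collides with one of the three base points ($u+\bar u\in\{0,2\}$ or $1/u+1/\bar u=2$). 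You instead parametrize the parabolas of the pencil by $[\alpha:\gamma]\in\PG(1,q^2)$ via $b^2=4ac$, observe there are $q^2-q$ with infinite point off $D$, and count the bad ones by summing, over candidate fourth points $(x,y)$, the quadratic character of the discriminant $xy(x+y-1)$; the Jacobi-type sum evaluates to $S=q$, giving $(q-1)(q-3)$ bad parabolas and hence $3(q-1)$ good ones. I checked the discriminant, the inclusion--exclusion count $N=q^2-3q+3$, the character-sum evaluation, and the double-counting step (each bad parabola has a unique extra rational point by Corollary~\ref{skcor}, necessarily off the sides of the triangle since a side already carries two base points), and all are sound; note your totals are consistent with the paper's stronger observation that \emph{every} parabola in the pencil with infinite point off $D$ meets the new line in exactly $3$ or $4$ points. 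The paper's route buys that extra structural information (the explicit fourth point and the absence of $\le 2$-secants), at the cost of a computation ``left to the reader''; your route is a cleaner self-contained count, trading the explicit fourth point for a standard character-sum evaluation.
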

\begin{proof}
Let us write $P_i=(a_i,b_i)$ for $i=1,2,3$. The translation $(x,y) \mapsto (x,y)
- (a_1,b_1)$ maps $\ell$ to a new line through the point $(0,0),$ and therefore,
the affine points of the latter new line form the set $\{(\lambda x,\lambda y)
\mid x,y \in \GF(q)\}$ for some $\lambda\in \GF(q^2)^*$. There exists a
non-singular matrix $A$ with entries in $\GF(q)$ such that
$(a_2-a_1,b_2-b_1)A=\lambda(-1,0)$ and $(a_3-a_1,b_3-b_1)A=\lambda(0,-1)$. Let
$\varphi$ be  the automorphism of $\AG(q^2)$ defined by $\varphi : (x,y) \mapsto
\lambda^{-1}(x-a_1,y-b_1)A$. This extends naturally to a projectivity of
$\PG(2,q),$ which fixes $\ell_\infty$ setwise, and maps $D$ to itself. The image
$\varphi(\ell)$ is the new line for which
\[
\varphi(\ell) \, \setminus \, \ell_\infty =\{(x,y) \mid x,y \in \GF(q)\}.
\]

We are done if we show that there are exactly $3(q-1)$
parabolas whose infinite points are not in $D$ and which intersect $\varphi(\ell)$ in
exactly the points $(0,0), \, (-1,0)$ and $(0,-1)$.

For $u \in \GF(q^2) \setminus  \GF(q),$
denote by $K_u$ be the unique parabola that
contains the points $(0,0), \, (-1,0)$ and $(0,-1)$ and the
infinite point $(-u : 1 : 0)$.
Then $K_u$ has affine equation
\[
f(X,Y)=(X+uY)^2+ X+u^2 Y=0.
\]

We find next all $\GF(q)$-rational points of
$K_u$. If $P=(a,b)$ is such a point, then we compute
\begin{eqnarray*}
f(a,b)-\overline{f(a,b)} &=& (u-\bar{u})b(2a+(u+\bar{u})(b+1))=0, \\
-\bar{u}^2 f(a,b)+u^2 \overline{f(a,b)} &=&
(u - \bar{u})a( (u+\bar{u})(a+1)+2u \bar{u} b)=0.
\end{eqnarray*}
Since $u-\bar{u} \ne 0,$ these show that $a=0$ or $b=0$ (and in this
case $(a,b) \in \{(0,0),(-1,0),(0,-1)\}$), or
$(a,b)$ can be obtained as the unique solution of a system of linear equations, which then
yields
\[ P=\Big(\, \frac{(u+\bar{u})(2u\bar{u}-u-\bar{u})}{(u-\bar{u})^2},
\, \frac{(u+\bar{u})(2-u-\bar{u})}{(u-\bar{u})^2} \, \Big). \]
It is clear that $P$ is $\GF(q)$-rational, and we leave for the reader to check that it
lies on $K_u$.

We conclude that $|\varphi(\ell) \cap K_u| \in \{3,4\},$ and $|\varphi(\ell)
\cap K_u|=3$ iff $P$ is equal to one of the points $(0,0), \, (-1,0)$ and
$(0,-1)$. A quick computation gives that this occurs iff $u+\bar{u} \in \{0,2\}$
or $1/u+1/\bar{u}=2$. It can be easily checked that $u$ satisfies one of the
latter conditions iff $u \in \GF(q)^* \omega$ or $u \in \GF(q)^*\omega+1$ or $u
\in (\GF(q)^*\omega +1)^{-1},$ where $\omega \in \GF(q^2)$ is any element such
that $\omega^2$ is a non-square in $\GF(q)$. This implies that  there are
$3(q-1)$ parabolas $K_u$ intersecting $\varphi(\ell)$ in exactly the points
$(0,0), \, (-1,0)$ and $(0,-1),$ and this completes the proof of the lemma.
\end{proof}

\begin{theorem}
Let $D$ be a derivation set on $\ell_\infty$ of $\AG(2,q^2)$ with $q$ odd. Let $K$ be a parabola whose infinite point does not belong to $D$. Then there are $a_3=(q^2-1)/2$ and $a_4=(q-3)(q^2-1)/24$ lines of $\Hall(q^2)$ meeting $K$ in $3$ or $4$ points, respectively. 
\end{theorem}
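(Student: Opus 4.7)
The plan is to couple the identity $a_3 + 4a_4 = \binom{q+1}{3}$, already on hand from the Segre--Korchm\'aros count of Hall-collinear triples together with the $|\ell \cap K|\le 4$ bound, with a second linear relation obtained by a global double count based on Lemma~\ref{l:3-secant}. The point is that the $\PGL$-transitivity argument preceding Lemma~\ref{l:3-secant} shows $a_3$ is the same for every parabola whose infinite point lies outside $D$, so I can determine it by averaging over the whole family $\mathcal{P}$ of admissible parabolas rather than from a single $K$.

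Concretely I would count pairs $(K',\ell)$, where $K'\in\mathcal{P}$ and $\ell$ is a new line with $|\ell\cap K'|=3$, in two ways. Summing over $K'$ yields $|\mathcal{P}|\cdot a_3$. Summing over $\ell$ invokes Lemma~\ref{l:3-secant}: each triple of affine points on a fixed new line $\ell$ that is non-collinear in $\PG(2,q^2)$ is realized as $\ell\cap K'$ by exactly $3(q-1)$ members of $\mathcal{P}$, while triples collinear in $\PG(2,q^2)$ contribute nothing since a conic has no three collinear points. Equating the two sides reduces the problem to three auxiliary counts: the number of new lines (which is $q^2(q+1)$), the size of $\mathcal{P}$, and the number of non-collinear triples of affine points on a Baer subplane containing $D$ (the latter obtained from $\binom{q^2}{3}$ by subtracting the $\binom{q}{3}$ triples on each of the $q^2+q$ affine lines of the subplane).

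Carrying out those auxiliary counts and dividing yields $a_3=(q^2-1)/2$; substituting back into $a_3 + 4a_4 = \binom{q+1}{3}$ then gives $a_4=(q-3)(q^2-1)/24$. The main obstacle I anticipate is the parametric enumeration of $|\mathcal{P}|$: one places a parabola tangent to $\ell_\infty$ at a fixed external point $I$ into a normal form like $X^2+dXZ+eYZ+fZ^2=0$, identifies the nondegeneracy condition $e\ne 0$, and multiplies by the $q^2-q$ admissible choices of $I$ -- a short computation in itself but the one step where it is easy to lose a factor. Everything else in the plan is standard Baer-subplane combinatorics once Lemma~\ref{l:3-secant} is accepted.
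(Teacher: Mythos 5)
Your proposal is correct and follows essentially the same route as the paper: the authors also combine the constancy of $a_3$ over the family $U$ of admissible parabolas with a double count of triples $(K,B,\{P_1,P_2,P_3\})$ via Lemma~\ref{l:3-secant}, obtaining $6|U|a_3=|V|\,q^2(q^2-1)(q^2-q)\cdot 3(q-1)$ with $|U|=(q-1)(q^2-1)q^5$ from the normal form $Y=\alpha(X-uY)^2+\beta(X-uY)+\gamma$, and then read off $a_4$ from $a_3+4a_4=\binom{q+1}{3}$. (Only a cosmetic remark: the infinite point $I$ of the parabola is its point of tangency with $\ell_\infty$, not an external point.)
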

\begin{proof} 
Let $U$ be the set of parabolas whose infinite point does not belong to $D$. Any element of $U$ has a uniquely defined equation of the form 
\[Y=\alpha (X-uY)^2+\beta(X-uY)+\gamma,\] 
with $u\in \GF(q^2)\setminus \GF(q)$, $\alpha \in \GF(q^2)^*$, $\beta,\gamma \in \GF(q^2)$. Hence, \[|U|=(q-1)(q^2-1)q^5.\] 
We showed above that for any $K\in U$, the number of $3$-secant new lines is a constant $a_3$. 

Let $V$ be the set of new lines; $|V|=(q+1)q^2$. For the set 
\[W=\{ (K,B,P_1,P_2,P_3) \mid K\in U, B\in V, K\cap B=\{P_1,P_2,P_3\}\},\]
one has
\[|W|=6|U|a_3=|V|q^2(q^2-1)(q^2-q)\cdot 3(q-1)\]
by Lemma \ref{l:3-secant}. The value for $a_4$ follows from $a_3+4a_4=\binom{q+1}{3}$. 
\end{proof}

%\vskip 6pt
%As a consequence we find that there are $a_3=(q^2-1)/2$ new lines intersecting $K$ in exactly $3$ points. 

%\vskip 6pt
The case when $q$ is even is more interesting. The four cases are treated by O'Keefe, Pascasio \cite{O'KP}, O'Keefe, Pascasio, and Penttila \cite{O'KPP} and Glynn, Steinke \cite{GS}.

\begin{theorem}[\cite{O'KP}, \cite{O'KPP}, \cite{GS}] Let $D$ be a derivation set on
$\ell_\infty$ of ${\rm AG}(2,q^2)$, with $q\ge 4$ even,
and $K$ a parabola with infinite point $I$ and nucleus $N$.
\begin{enumerate}[(i)]
\item If $I\in D$ and $N\in D$, then $K$ is not an arc in the Hall plane.
\item If $I\not\in D$ and $N\in D$,
then $K$ is a translation
$q^2$-arc in the derived plane and it can be extended to a hyperoval. Any
two hyperovals of the Hall plane arising from this construction are equivalent
under the automorphism group of the Hall plane.
\item If $I\in D$ and $N\not\in D$, then $K$ is a translation
$q^2$-arc in the derived plane and it can be extended to a hyperoval. Any
two hyperovals of the Hall plane arising from this construction are equivalent
under the automorphism group of the Hall plane.
The two cases give inequivalent hyperovals in the Hall plane.
\item If $I\not\in D$ and $N\not\in D$, then $K\cup\{I\}$
is a translation oval if and only if $q$ is a square, and $I$ and $N$ are conjugate
with respect to $D$.
\end{enumerate}
\end{theorem}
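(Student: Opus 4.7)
The plan is to analyse each of the four cases (i)--(iv) in turn. Two facts are used throughout: in even characteristic the nucleus $N$ of $K$ lies on every tangent, hence on the tangent $\ell_\infty$ at $I$, so both $I$ and $N$ are points of $\ell_\infty$ and the four cases indeed exhaust the possibilities; and, since old lines of the Hall plane are ordinary lines of $\PG(2,q^2)$ meeting $K$ in at most two points, the arc property of $K$ hinges only on how Baer subplanes through $D$ meet $K$.

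For case (i), I would place $K$ into the normal form $Y=X^2$ with $I=(\infty)$ and $N=(0)$, both in the standard $D$, and exhibit explicitly a Baer subplane through $D$ containing three affine points of $K$: solve $x_i^2=y_i$ subject to $(x_i-a)/\lambda,(y_i-b)/\lambda\in\GF(q)$ for three distinct values. Conversely, for cases (ii) and (iii), the same algebraic manipulation shows that three affine points of $K$ lying in a common Baer subplane through $D$ would force both $I$ and $N$ into $D$, contradicting the hypothesis; so $K$ is an arc. The translation arc property follows from the fact that the group of affine translations of $\AG(2,q^2)$ stabilising a parabola acts regularly on its affine points, and these translations remain automorphisms of the Hall plane. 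Extendability to a hyperoval is obtained by identifying the single infinite point to adjoin via the nuclear structure, with uniqueness up to the Hall automorphism group argued as in the sharp-transitivity step of Lemma~\ref{l:3-secant}. Inequivalence of the configurations in (ii) and (iii) follows from an $(I,N)$-based invariant that distinguishes the role of the nucleus.

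For case (iv), one must decide when $K\cup\{I\}$ is an oval in the Hall plane and when it is a translation oval. The oval condition is the non-existence of a Baer subplane through $D$ meeting $K\cup\{I\}$ in three points; this splits into two subcases (the subplane contains $I$ or not) and leads to an algebraic criterion on the coefficients of $K$ relative to the $D$-coordinates. The translation oval property then requires that the additive group of translations preserving $K\cup\{I\}$ in the Hall plane has order $q^2$, which, after unraveling the $\GF(q^2)/\GF(q)$ extension, forces a proper subfield of $\GF(q)$ of index $2$ and hence $q$ square, together with the conjugacy of $I$ and $N$ with respect to $D$.

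The main obstacle I anticipate is precisely this last step of case (iv): converting the translation-oval hypothesis, which is a statement about the stabiliser of $K\cup\{I\}$ in the Hall automorphism group, into the clean arithmetic condition ``$q$ is a square and $I, N$ are conjugate with respect to $D$''. This demands careful bookkeeping of the Frobenius action on $\GF(q^2)$ against the subfield structure of $\GF(q)$, and it is this delicate synthesis that forms the technical heart of \cite{GS}.
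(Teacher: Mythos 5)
First, a point of order: the paper does not prove this theorem at all --- it is quoted verbatim from \cite{O'KP}, \cite{O'KPP} and \cite{GS} as background, so there is no in-paper argument to measure your proposal against; what follows is an assessment of the proposal on its own terms. Your framing is right (in even characteristic $N$ lies on the tangent $\ell_\infty$, so the four cases are exhaustive, and only new lines matter), and your treatment of (i) via the normal form $Y=X^2$ is fine. But the pivotal claim for (ii) and (iii) --- that three points of $K$ in a common Baer subplane through $D$ ``force both $I$ and $N$ into $D$'' --- is false as an unconditional implication: in case (iv) neither $I$ nor $N$ lies in $D$, yet the paper's later theorem on the Glynn--Steinke case exhibits $4$-secant new lines (each point of $K$ lies on $(q+1)/3$ of them when $I,N$ are conjugate and $q$ is a nonsquare). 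The correct mechanism is conditional and asymmetric: parametrizing $K$ so that the chord through parameters $t_1,t_2$ meets $\ell_\infty$ in the point with coordinate $t_1+t_2$ (taking $N$ as the point at infinity of $\ell_\infty$ and $I$ as the origin), the three side-directions of any inscribed triangle sum to $0$, i.e.\ to the coordinate of $I$; dually, with $I=(\infty)$ the three chord slopes sum to the slope of $N$. If exactly one of $I,N$ lies in $D$, then $D$ minus that point is an additive coset $a+c\,\GF(q)$, three of whose elements sum to an element of $a+c\,\GF(q)$, which can equal the forced value only if the \emph{other} special point also lies in $D$ --- contradiction. If neither lies in $D$, $D$ is not a coset in either coordinate system and the argument gives nothing, which is why (iv) behaves differently. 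Without isolating this identity your cases (ii) and (iii) are not actually established.

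Beyond that, the substantive content of the theorem is left unargued. That the $q^2$-arc extends to a hyperoval is not automatic for a $q^2$-arc in a plane of order $q^2$; one must exhibit the two extension points (one of $I$, $N$ survives derivation, the other is replaced by new infinite points, and the correct new point must be identified and verified). The equivalence of all hyperovals within each of (ii), (iii) under $\mathrm{Aut}(\Hall(q^2))$, and the inequivalence between the two cases, are asserted via an unspecified ``$(I,N)$-based invariant''. Most seriously, case (iv) --- the actual content of \cite{GS} --- is reduced to the heuristic that a translation oval ``forces a proper subfield of $\GF(q)$ of index $2$''; this is not an argument, and the true source of the condition ``$q$ a square and $I,N$ conjugate with respect to $D$'' is an explicit rationality analysis of the intersection $K\cap\bar K$ over $\GF(q)$ (of the flavour of Lemma~\ref{lm:beta} in this paper, where the dichotomy comes from whether $\GF(4)\subseteq\GF(q)$, i.e.\ whether $q$ is a square). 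As it stands the proposal proves (i), sketches a repairable but misstated argument for the arc property in (ii)--(iii), and does not prove the hyperoval, equivalence, or case (iv) statements.
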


Also in the case $q$ even, we know something about the combinatorial structure
of $K$ in $\Hall(q^2)$ if $I,N\in D$. In this case we may assume that the parabola has
equation $K:Y=X^2$ and $D$ is the standard derivation set. Points of $K$
whose coordinates are in $\GF(q)$ are collinear in $\Hall(q^2)$ and
the same is true for points whose first coordinate is in an additive coset of
$\GF(q)$. So the points of $K$ are on $q$ parallel lines. Other triples
are not collinear.

In the general Glynn--Steinke case $I,N\not\in D$, we can show that each
line meets $K$ in $0$, $1$, $2$ or $4$ points.

\begin{lemma}\label{lm:beta}
Let $q$ be a power of $2$ and $\beta \in \GF(q^2)^*$. Let $N_\beta$ be the number of $\GF(q)$-rational roots of
\[f(T)=T^3+\beta\bar{\beta}T + \beta\bar{\beta}(\beta+\bar{\beta}).\]
\begin{enumerate}[(a)]
\item If $q$ is a square then
\[N_\beta = \begin{cases}
3 & \mbox{for $\beta \in \GF(q)$,} \\
1 & \mbox{for $\beta \in \GF(q^2)\setminus\GF(q)$.}
\end{cases}\]
\item If $q$ is not a square then
\[N_\beta = \begin{cases}
3 & \mbox{if $\beta$ is a cube in $\GF(q^2)^*$,} \\
0 & \mbox{otherwise.}
\end{cases}\]
\end{enumerate}

\end{lemma}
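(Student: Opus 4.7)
The plan is to normalize $f$ via the substitution $T=\alpha V$, where $\alpha\in \GF(q)^*$ is the unique square root of $c:=\beta\bar\beta \in \GF(q)^*$, and then count the roots of the resulting cubic via the Dickson-type parametrization $V=y+y^{-1}$, which turns the problem into a quadratic in $z=y^3$. Writing $t:=\beta+\bar\beta\in \GF(q)$, the case $\beta\in \GF(q)$ is disposed of immediately: there $t=0$ and $f(T)=T(T+\beta)^2$, so $N_\beta=3$ (counted with multiplicity); for $q$ non-square, $\gcd(3,q-1)=1$ makes every element of $\GF(q)^*$ a cube in $\GF(q^2)^*$, matching (b).

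For $\beta\notin \GF(q)$, so $t\ne 0$, the substitution $T=\alpha V$ converts $f(T)=0$ into $V^3+V+\mu=0$ with $\mu:=t/\alpha\ne 0$; its derivative $(V+1)^2$ vanishes only at $V=1$, which is not a root, so all roots are simple. Applying $V=y+y^{-1}$ and expanding in characteristic $2$ gives $V^3+V=y^3+y^{-3}$, so with $z=y^3$ the cubic reduces to the quadratic $z^2+\mu z+1=0$. One checks directly that its roots are $z_1=\beta/\alpha$ and $z_2=\bar\beta/\alpha=z_1^{-1}$; both lie in $\GF(q^2)\setminus \GF(q)$ (since $\beta\notin \GF(q)$) and satisfy $z_i\,z_i^q=1$, placing them in the norm-$1$ subgroup $\mu_{q+1}\le \GF(q^2)^*$.

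For each prospective root $V\in \GF(q)^*$, Vieta applied to $y^2+Vy+1=0$ gives roots $y,y^{-1}$ that are either both in $\GF(q)^*$ or Galois-conjugate in $\mu_{q+1}\setminus\{1\}$. Since $z_1,z_2\notin \GF(q)$ and $\GF(q)^*\cap \mu_{q+1}=\{1\}$ (using $\gcd(q-1,q+1)=1$ for $q$ even), the constraint $y^3\in\{z_1,z_2\}$ forces $y\in \mu_{q+1}\setminus\{1\}$, and $N_\beta$ equals half the number of such $y$. The cubing map on the cyclic group $\mu_{q+1}$ is bijective when $\gcd(3,q+1)=1$ and $3$-to-$1$ onto the index-$3$ subgroup of cubes otherwise. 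When $q$ is a square, $q+1\equiv 2\pmod 3$, so cubing is bijective; each of $z_1,z_2$ has a unique cube root in $\mu_{q+1}$, giving $N_\beta=1$, which proves (a). When $q$ is non-square, $3\mid q+1$; then either $z_1$ is a cube in $\mu_{q+1}$ (so is $z_2=z_1^{-1}$, giving $6$ values of $y$ and $N_\beta=3$), or neither is, giving $N_\beta=0$.

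To identify the cube condition on $z_1$ with the one on $\beta$ stated in (b), I will invoke the internal direct product $\GF(q^2)^*=\GF(q)^*\times \mu_{q+1}$ (valid since $\gcd(q-1,q+1)=1$ for even $q$): the decomposition of $\beta$ in this product is precisely $\beta=\alpha\cdot z_1$, so $\beta$ is a cube iff both $\alpha\in\GF(q)^*$ and $z_1\in\mu_{q+1}$ are cubes in their respective groups. The hypothesis $q$ non-square gives $\gcd(3,q-1)=1$, so $\alpha$ is automatically a cube in $\GF(q)^*$, and hence $\beta$ is a cube iff $z_1$ is a cube in $\mu_{q+1}$, finishing (b). The main obstacle is spotting the Dickson-style substitution $V=y+y^{-1}$ that collapses the cubic to a quadratic in $y^3$; everything else is careful bookkeeping on the cubing maps in the two factors of $\GF(q^2)^*\cong \GF(q)^*\times \mu_{q+1}$.
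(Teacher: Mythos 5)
Your proof is correct, but it runs along a genuinely different track from the paper's. The paper \emph{exhibits} the three roots explicitly: taking $d$ with $d^3=\beta$ and $\varepsilon$ with $\varepsilon^2+\varepsilon+1=0$, it writes $t_1=d^{q+1}(d+d^q)$, $t_2=d^{q+1}(\varepsilon^2 d+\varepsilon d^q)$, $t_3=d^{q+1}(\varepsilon d+\varepsilon^2 d^q)$, and then reads off rationality by tracking how the Frobenius $x\mapsto x^q$ permutes $t_1,t_2,t_3$ according to whether $\varepsilon\in\GF(q)$ (i.e.\ $q$ is a square) and whether $d\in\GF(q^2)$ (i.e.\ $\beta$ is a cube). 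You instead normalize to $V^3+V+\mu=0$ with $\mu=(\beta+\bar\beta)/\alpha$, apply the characteristic-two reduction $V=y+y^{-1}$ to get the quadratic $z^2+\mu z+1$ in $z=y^3$, identify its roots as $\beta/\alpha$ and $\bar\beta/\alpha$ in the norm-one subgroup $\mu_{q+1}$, and count cube roots there using $\gcd(3,q+1)$ and the decomposition $\GF(q^2)^*=\GF(q)^*\times\mu_{q+1}$. I checked the details: the exclusion of $y\in\GF(q)^*$ (since $z_1,z_2\notin\GF(q)$), the two-to-one pairing $\{y,y^{-1}\}\leftrightarrow V$ (no fixed points because $y=1$ would force $z_i=1$, i.e.\ $\beta=\alpha\in\GF(q)$), the simplicity of the roots (the derivative $(V+1)^2$ does not vanish at a root since $\mu\neq0$), and the transfer of the cube condition from $z_1\in\mu_{q+1}$ to $\beta\in\GF(q^2)^*$ via $\gcd(3,q-1)=1$ all hold. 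What each approach buys: the paper's explicit root formulas are shorter once guessed and immediately reusable (the roots themselves appear implicitly in the subsequent theorem), whereas your derivation is self-contained, explains \emph{where} the roots come from, and makes the role of $\gcd(3,q\pm1)$ --- hence the square/non-square dichotomy --- conceptually transparent. Both treat the degenerate case $\beta=\bar\beta$ identically, counting the root $\beta$ of $T(T+\beta)^2$ with multiplicity, which is the convention the lemma statement implicitly uses.
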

\begin{proof} 
If $\beta=\bar{\beta}$ then the roots of $f(T)$ are $0,\beta,\beta$, in accordance with (a) and (b). For the remaining part, we assume $\beta\neq \bar{\beta}$. Let $\varepsilon, d$ be elements of $\overline{\GF(q)}$ such that $\varepsilon^2+\varepsilon+1=0$ and $d^3=\beta$. Then, the three different roots of $f(T)$ are
\begin{align*}
t_1 &= d^{q+1}(d+d^q),\\
t_2 &= d^{q+1}(\varepsilon^2 d+\varepsilon d^q),\\
t_3 &= d^{q+1}(\varepsilon d+ \varepsilon^2 d^q).
\end{align*}
($\beta\neq \bar{\beta}$ implies $t_i\neq t_j$ for $i\neq j$.)

%Case 1: $q$ is not a square.
{Assume that $q$ is not a square. Then
$\varepsilon^q=\varepsilon^2,$ and thus}
%If
if $\beta$ is a cube in $\GF(q^2)$, then $d\in \GF(q^2),$ and $t_1,t_2,t_3\in \GF(q)$. If $d\not \in \GF(q^2)$ then the three cubic roots of $\beta$ are $d, d^{q^2}, d^{q^4}$, and we have $ d^{q^2}=\varepsilon d$. This implies $t_1^q=t_2$, $t_2^q=t_3$ and $t_3^q=t_1$, showing that no root of $f(T)$ lies in $\GF(q)$. This proves (b).
%Case 2: $q$ is a square. 

Now, let $q$ be a square. Then $\varepsilon^q=\varepsilon,$ and we obtain that $t_1^q=t_1,$ $t_2^q=t_3$ and $t_3^q=t_2$ when $\beta$ is a cube in $\GF(q^2)$,  and $t_3^q=t_3,$ $t_1^q=t_2$ and $t_2^q=t_1$
when $\beta$ is not a cube. In either case $f(T)$ has one root in $\GF(q),$ as claimed in (a).
\end{proof}

\begin{theorem}
Let $D$ be a derivation set on
$\ell_\infty$ of ${\rm AG}(2,q^2)$, with $q\ge 4$ even, and $K$ a parabola with infinite point $I$ and nucleus $N$. Assume that $I\not\in D$ and $N\not\in D$. Then the following holds:
\begin{enumerate}[(i)]
\item Each line of the Hall plane intersects $K$ in $0,1,2$ or $4$ points.
\item If $I$ and $N$ are conjugate with respect to $D$, and $q$ is not a square, then each point $P\in K$ is contained in $(q+1)/3$ $4$-secant new lines and $2(q+1)/3$ $1$-secant new lines. In particular, the Hall plane has no $2$-secant new lines.
\end{enumerate}
\end{theorem}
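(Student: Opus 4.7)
For part (i), the plan is to use the Baer involution $\sigma_B$ of $\PG(2,q^2)$ that fixes a given Baer subplane $B\supset D$ pointwise. First observe that $\sigma_B(K)\ne K$: otherwise $K$ would be the extension of a conic of $B$, and Proposition~\ref{sk2} would force the subconic $K\cap B$ to have $D$ as a tangent at some point of $D\cap K$, contradicting $I\notin D$. By Bezout, $K\cap B\subseteq K\cap\sigma_B(K)$ contains at most four points counted with multiplicity, and $K\cap B$ consists exactly of the $\sigma_B$-fixed points of $K\cap\sigma_B(K)$. If $|K\cap B|=3$ and the four intersection points are distinct and transversal, then an involution fixing three of four points necessarily fixes the fourth, giving $|K\cap B|=4$, a contradiction. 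Otherwise one intersection has multiplicity two; by uniqueness it is $\sigma_B$-fixed, so it is an affine point $A\in B$ at which $K$ and $\sigma_B(K)$ share the tangent. For the parabola $K$ this tangent is $AN$, and for $\sigma_B(K)$ it is $A\sigma_B(N)$. Since $N\notin D$, $\sigma_B(N)\ne N$, and both lie on $\ell_\infty$; coincidence of the two tangents would then force $A\in N\sigma_B(N)=\ell_\infty$, contradicting $A$ being affine. Thus $|K\cap B|\in\{0,1,2,4\}$.

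For part (ii), fix $P\in K$, translate so that $P=(0,0)$, and, using $N=\bar I$, choose coordinates with $I=(1:\iota:0)$, $N=(1:\iota^q:0)$ for some $\iota\in\GF(q^2)\setminus\GF(q)$, $\tau:=\iota+\iota^q\ne 0$, so that $K$ has equation $\iota^2X^2+Y^2+\iota^qXZ+YZ=0$. Parametrize $K\setminus\{I\}$ by $t\in\GF(q^2)$ via $Q_t=\tau^{-1}(t^2+t,\,\iota t^2+\iota^q t)$, with $Q_0=P$. The $q+1$ new lines through $P$ are the Baer subplanes $B_\lambda=\lambda\cdot\GF(q)^2$, $\lambda\in\GF(q^2)^*/\GF(q)^*$, and membership $Q_t\in B_\lambda$ amounts to $A/\bar A=B/\bar B=\mu$, where $A=t^2+t$, $B=\iota t^2+\iota^q t$, and $\mu=\lambda^{1-q}$. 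A direct calculation gives the two key identities
\[A\bar B+\bar AB=\tau\,t\bar t(t\bar t+1),\qquad A/\bar A=t^3\ \text{ when }\ t\bar t=1,\]
so the points of $K\cap B_\lambda$ other than $P$ correspond exactly to solutions of $t^3=\mu$ in the norm-one subgroup $C=\{z\in\GF(q^2)^*:z\bar z=1\}$.

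Setting $T=t+\bar t\in\GF(q)$ and using $\bar t=1/t$, the equation $t^3=\mu$ on $C$ becomes $T^3+T+\mu+\bar\mu=0$, which is precisely the polynomial $f(T)$ of Lemma~\ref{lm:beta} with $\beta=\mu$ (indeed $\beta\bar\beta=\mu\bar\mu=1$). The map $\lambda\mapsto\mu=\lambda^{1-q}$ is a bijection from $\GF(q^2)^*/\GF(q)^*$ onto $C$, both of cardinality $q+1$. Since $q$ is an odd power of $2$, $q\equiv 2\pmod 3$ yields $3\mid q+1$, and cubes form a subgroup of $C$ of index $3$. By Lemma~\ref{lm:beta}(b), $f$ has three $\GF(q)$-roots when $\mu$ is a cube in $\GF(q^2)^*$ and none otherwise. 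Thus $B_\lambda$ is a $4$-secant for exactly $(q+1)/3$ values of $\lambda$ and a $1$-secant for the remaining $2(q+1)/3$; as every $2$-secant new line must meet $K$, the Hall plane has no $2$-secant new lines. The main technical step is the verification of the two displayed identities, which reduces the Baer-subplane membership to a single cubic on $C$; after that, the count is immediate from the lemma and the structure of the norm-one subgroup.
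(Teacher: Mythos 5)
Your proof is correct, but it takes a genuinely different route from the paper's, most notably in part (i). The paper proves (i) by brute force: it writes the restriction of $K$ to a new line as a conic $C$ over $\GF(q^2)$, computes the resultant of $C$ and its conjugate $\bar C$ (a quartic in $y$ with nonzero constant derivative, hence four simple roots), and observes that if three roots are rational so is the fourth. Your argument replaces this with a synthetic one: the Baer involution $\sigma_B$ permutes the four Bézout intersection points of $K$ and $\sigma_B(K)$, so a $3$-point intersection forces either a fixed fourth point or a tangency point $A$ at which $N$, $\sigma_B(N)$ and $A$ are collinear, pushing $A$ onto $\ell_\infty$ --- a contradiction since $I\notin D$ makes $K\cap B$ affine. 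This is cleaner and makes the role of the hypothesis $N\notin D$ transparent (in the paper it is buried in the nonvanishing of the coefficient of $y$ in the resultant). For part (ii) the paper again uses the resultant, which for $u=\bar v$ factors as $T\cdot f(T)$ with $f$ as in Lemma \ref{lm:beta}, and then counts $4$-secants through a point via cosets of $\GF(q)^*$ inside the index-$3$ subgroup of $\GF(q^2)^*$; you instead rationally parametrize $K$ and reduce membership of $Q_t$ in $B_\lambda$ to the single equation $t^3=\mu$ on the norm-one circle $C$, counting via the index-$3$ subgroup of cubes in $C$. The two counts are of course the same group-theoretic fact; your identities $A\bar B+\bar AB=\tau t\bar t(t\bar t+1)$ and $A/\bar A=t^3$ on $t\bar t=1$ check out, as does the normalization of $K$ by $D$-preserving translations and homotheties.

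One minor point: in part (ii) the passage through Lemma \ref{lm:beta} via $T=t+\bar t$ is not quite a bijection of solution sets --- the map $t\mapsto t+\bar t$ is two-to-one on $C\setminus\{1\}$, and a root $T$ of $f$ only tells you $t^3\in\{\mu,\bar\mu\}$ --- so the lemma does not literally transport the count. But this detour is superfluous: your direct statement that the solutions of $t^3=\mu$ in the cyclic group $C$ of order $q+1$ (with $3\mid q+1$ since $q$ is a nonsquare power of $2$) number $3$ or $0$ according as $\mu$ is a cube already yields the $(q+1)/3$ versus $2(q+1)/3$ split, and the absence of $2$-secant new lines follows as you say. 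So the proof stands without Lemma \ref{lm:beta}, which is arguably a simplification over the paper.
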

\begin{proof} 
Let $I$ and $N$ be the points $(u:1:0)$ and $(v:1:0)$ of the line at infinity; $u,v\in \GF(q^2)\setminus\GF(q)$. Then, the homogeneous equation of $K$ has the form
\[X^2+u^2Y^2+\beta_0 Z (X+vY)+\beta_1 Z^2=0,\]
where $\beta_0 \in \GF(q^2)^*$ and $\beta_1\in \GF(q^2)$. Let $\ell$ be a new line of the Hall plane and assume $K\cap \ell \neq \emptyset$. W.l.o.g. we can assume that $(0,0) \in K\cap \ell$. Then $\beta_1=0$ and
\[\ell \, {\setminus \, \ell_\infty}
=\{(\lambda x, \lambda y,1) \mid x,y \in \GF(q)\}\]
for some $\lambda \in \GF(q^2)^*$. In order to compute $K\cap \ell$, we substitute $X=\lambda x$, $Y=\lambda y$, $Z=1$ in the equation of $K$. We obtain
\[C:x^2+u^2y^2+\beta(x+vy)=0,\]
where $\beta=\beta_0/\lambda \in \GF(q^2)^*$. The $\GF(q)$-rational points of $C$ are contained in $C\cap \bar{C}$.

Assume $\beta=\bar{\beta}\in \GF(q)^*$. Then $C+\bar{C}: (u^2+\bar{u}^2)y^2+\beta(v+\bar{v})y=0$, giving two $\GF(q)$-rational roots $y_1=0$ and $y_2=\frac{\beta(v+\bar{v})}{u^2+\bar{u}^2}=\gamma \in \GF(q)^*$. For $y_1=0$, we get $x_1=0$ or $x_2=\beta$, two rational points. For $y_2=\gamma$, we get two different roots $x_3,x_4$ of $x^2+\beta x+u^2\gamma^2+\beta v \gamma$. This means $2$ or $4$ $\GF(q)$-rational points on $C$.

Assume now $\beta \neq \bar{\beta}$ and compute the resultant
%\[R_{C,\bar{C}}(y)=(u+\bar{u})^4 y^4 + (\cdots)y^2 + \beta\bar{\beta} (\beta+\bar{\beta}) (v+\bar{v}) y.\]
\[R_{C,\bar{C}}(y)=(u+\bar{u})^4 y^4 + [(u+\bar{u})^2\beta\bar{\beta} +  (u+\bar{v})^2\bar{\beta}^2 + (v+\bar{u})^2\beta^2]y^2 + \beta\bar{\beta} (\beta+\bar{\beta}) (v+\bar{v}) y.\]
As the derivative is a nonzero constant, this resultant has four different roots. Clearly, if three of them sit in $\GF(q)$ then so does the fourth. If $y=\gamma$ is a rational root of the resultant, then
\[x^2+\beta x+ u^2\gamma^2+\beta v \gamma, \quad x^2+\bar{\beta} x+ \bar{u}^2\gamma^2+\bar{\beta} \bar{v} \gamma\]
have a unique rational common root, giving rise to a unique $\GF(q)$-rational point of $C$.
{In particular, $\ell$ intersects $K$
in $0, 1, 2$ or $4$ points, and this together with the previous paragraph
shows that (i) holds.}

We turn now to the statement in (ii), and assume that $q$ is not a square and $I$ and $N$ are conjugate w.r.t.\ $D$. This means $u=\bar{v}$ and the resultant $R_{C,\bar{C}}(y)$ becomes
\[r(T)=T^4+\beta\bar{\beta}T^2 + \beta\bar{\beta}(\beta+\bar{\beta})T,\]
where $T=(u+\bar{u})y$. By Lemma \ref{lm:beta}, $r(T)$ has $1$ or $4$ $\GF(q)$-rational roots, depending whether $\beta$ is a cube or not in $\GF(q^2)^*$. Moreover, these roots are different for $\beta\neq \bar{\beta}$,
and hence $\ell$ is a $1$- or a $4$-secant of $K$ depending whether $\beta$ is a cube or not.
If $\beta=\bar{\beta},$ then a straightforward calculation shows,
that in this case, the four points of $C\cap \bar{C}$ are
\[(0,0), \quad (0,\beta), \quad \left(\frac{u\beta}{u+\bar{u}} + \varepsilon \beta, \frac{\beta}{u+\bar{u}} \right), \quad \left(\frac{u\beta}{u+\bar{u}} + \varepsilon^2 \beta, \frac{\beta}{u+\bar{u}} \right),\]
where $\varepsilon^2+\varepsilon+1=0$. The last two points are $\GF(q)$-rational iff $\varepsilon+\varepsilon^q=1$, which holds iff $q$ is not a square.
The multiplicative group $\GF(q^2)^*$ is a cyclic group of order $q^2-1,$ let $\mathcal{K}$ and $\mathcal{L}$ be its unique subgroups of order $q-1$ and $(q^2-1)/3,$ resp.
As $q$ is not a square, $(q-1)$ divides $(q^2-1)/3,$ and thus $\mathcal{K} < \mathcal{L}$. Our above discussion shows that the new line $\ell$ is a $1$- or a $4$-secant of $K,$ and that it is a $4$-secant is equivalent to say that $\beta \in \mathcal{L}$.

Recall that, $\beta=\beta_0/\lambda,$ where $\beta_0$ is some
fixed element in $\GF(q^2)^*,$ and $\lambda \in \GF(q^2)^*$
defines the new line $\ell$. The $q+1$ new lines through the affine point $(0,0)$ can be listed by letting $\lambda$ run over any complete set of coset representatives of $\mathcal{K}$ in
$\GF(q^2)^*$.  Now, denoting by $\Lambda$ such a set of coset representatives, the number of $4$-secants through
$(0,0)$ is equal to
\[
|\{\lambda \in \Lambda : \beta_0/\lambda \in \mathcal{L}\}|=
|\Lambda \cap \mathcal{L}\beta_0|.
\]

Consider the canonical projection $\eta : \GF(q^2)^\ast \to \GF(q^2)^\ast/\mathcal{K}$. It follows that
$\eta(\Lambda)=GF(q^2)^\ast/\mathcal{K},$ and $\eta$ induces a bijection from $\Lambda \cap \mathcal{L}\beta_0$ to $\eta(\Lambda \cap \mathcal{L}\beta_0)=GF(q^2)^\ast/\mathcal{K} \cap \mathcal{L}/\mathcal{K}\, (\mathcal{K}\beta_0)=\mathcal{L}/\mathcal{K} \, (\mathcal{K}\beta_0)$ (here $\mathcal{K} \beta_0$ is regarded as an element in $\GF(q^2)^\ast/\mathcal{K}$).
This gives $|\Lambda \cap \mathcal{L}\beta_0|= |\mathcal{L}/\mathcal{K}|=(q+1)/3,$ and (ii) follows.
\end{proof}

%\vskip 6pt
Part (i) of the last proposition also follows from the proof of Glynn--Steinke, see \cite[Section 4]{GS}.

Part (ii) implies that if $I,N$ are conjugate w.r.t.\ $D$ and $q$ is not a square, then the number of $i$-secant new lines of the Hall plane is $a_0=\frac{1}{4}q^2(q+1)$, $a_1=\frac{2}{3}q^2(q+1)$ and $a_4=\frac{1}{12}q^2(q+1)$ for $i=0,1,4$.

\section{Inherited hyperbolas}

A surprising phenomenon occurs in this case. When the infinite points of
a hyperbola belong to the derivation set, then it is possible that although
the affine points of the hyperbola form an inherited arc, this arc is
complete. This was pointed out in \cite{SzTJG} and the possible
configurations were fully described by O'Keefe and Pascasio.
Note that in Galois planes there are no complete $(q-1)$-arcs by the
theorems of Segre mentioned in the introduction.

\begin{theorem}[O'Keefe-Pascasio, \cite{O'KP}]
Suppose that the line at infinity is a secant of a hyperbola $K$ whose
infinite points belong to the derivation set $D$. Assume that $q>3$ is odd and
$D$ is the standard derivation set.
Then either we have $K$ equivalent to the hyperbola $XY=1$ which does not
give an inherited arc, or to the hyperbola $XY=-d$ with $d$ a non-square
in $\GF(q^2)$ and we get a complete $(q^2-1)$-arc in ${\rm Hall}(q^2)$.
For $q>2$ even, and $D$ standard, $K$ is equivalent to the hyperbola $XY=1$
and does not give an inherited arc.
\end{theorem}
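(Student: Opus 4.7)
The strategy is to normalize $K$ via Hall-plane collineations to a canonical form and then analyze it with Propositions~\ref{hyp0} and~\ref{skodd} plus a counting argument. The stabilizer of $D$ in $\PGL_3(q^2)$ is a subgroup of the Hall-plane collineation group and acts on $D$ via $\PGL_2(q)$, in particular $2$-transitively; so one may assume the two infinite points of $K$ are $(0)$ and $(\infty)$. An affine translation (which fixes $\ell_\infty$ pointwise, hence preserves $D$) eliminates the linear $Z$-terms, giving $K:XY=c$ with $c\in \GF(q^2)^*$. The diagonal collineation $(x,y)\mapsto(\alpha x,\gamma\alpha y)$ with $\alpha\in \GF(q^2)^*$ and $\gamma\in \GF(q)^*$ preserves $D$ and multiplies $c$ by $(\gamma\alpha^2)^{-1}$. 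Since every element of $\GF(q)^*$ is a square in $\GF(q^2)^*$, the orbit of $c$ equals $c\cdot(\GF(q^2)^*)^2$. For $q$ odd this leaves two classes, represented by $c=1$ and $c=-d$ with $d$ a non-square in $\GF(q^2)^*$; for $q$ even every element of $\GF(q^2)^*$ is a square, so only $c=1$ occurs.

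The class of $XY=1$ (which also covers every even $q>2$) fails to be an arc for an elementary reason: the $q-1$ affine points $(t,t^{-1})$ with $t\in \GF(q)^*$ all lie in the standard subplane $B_0=\GF(q)^2\cup D$, a Baer subplane containing $D$ and hence a new line of $\Hall(q^2)$. These yield $q-1\ge 3$ collinear points of $\Hall(q^2)$.

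For $K:XY=-d$ with $d$ a non-square in $\GF(q^2)^*$ and $q$ odd, the line $Y=mX$ through the origin meets $K$ iff $X^2=-d/m$ has a solution, i.e.\ iff $-d/m$ is a square in $\GF(q^2)^*$. Since $-1$ and $m\in \GF(q)^*$ are squares while $d$ is not, $-d/m$ is a non-square and there is no intersection. Hence every point of $D\setminus K$ is internal to $K$, realizing the ``all internal'' alternative of Proposition~\ref{hyp0}. Proposition~\ref{skodd} then rules out any triangle inscribed in $K$ whose sides meet $\ell_\infty$ at three internal points, so no new line of $\Hall(q^2)$ meets $K$ in three or more affine points. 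Since the old lines are ordinary lines of $\PG(2,q^2)$ and meet the conic $K$ in at most two points, the $q^2-1$ affine points of $K$ form a $(q^2-1)$-arc of $\Hall(q^2)$. For completeness, fix an affine $P\notin K$. In $\PG(2,q^2)$ at least $(q^2-1)/2-2=(q^2-5)/2$ lines through $P$ meet $K$ in two affine points (subtracting at most the two lines $PI$ for $I$ an infinite point of $K$). Such a $2$-affine-secant fails to be an old line of $\Hall(q^2)$ only when its infinite point lies in $D\setminus K$, a set of size $q-1$. So at least
\[
\frac{q^2-5}{2}-(q-1)=\frac{(q-3)(q+1)}{2}
\]
$2$-affine-secants through $P$ remain lines of $\Hall(q^2)$, and this is positive precisely when $q>3$. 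Hence every such $P$ lies on a genuine $2$-secant of $K$, and $K$ is complete.

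The main obstacle is the normalization step, where one has to identify the correct Hall-plane collineation subgroup and track the square-class of the parameter~$c$. The arc and completeness halves then reduce to direct applications of Propositions~\ref{hyp0} and~\ref{skodd} together with the secant count above; the hypothesis $q>3$ enters naturally through the sign of $(q-3)(q+1)/2$.
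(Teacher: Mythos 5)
The paper does not actually prove this theorem; it is quoted from O'Keefe--Pascasio \cite{O'KP}, with only the remark that the odd case is ``essentially (one part of) Proposition~\ref{hyp0}''. Your self-contained argument is therefore necessarily a different route, and most of it is sound: the normalization to $XY=c$ with two square classes of $c$ for $q$ odd and one for $q$ even, the observation that $XY=1$ has $q-1\ge 3$ points in the standard Baer subplane on $D$, the use of Proposition~\ref{skodd} to exclude $3$-secant new lines once all points of $D\setminus K$ are known to be internal, and the count $(q^2-5)/2-(q-1)=(q-3)(q+1)/2>0$ showing every affine point off $K$ lies on an old $2$-secant. (Two points you leave implicit: a chord $A_iA_j$ of $K$ cannot pass through an infinite point of $K$, since a line meets a conic in at most two points, so the triples $\{P_1,P_2,P_3\}$ really do lie in $D\setminus K$ as Proposition~\ref{skodd} requires; and completeness is only checked against affine points, which matches the paper's affine reading of $\Hall(q^2)$ but not the projective closure.)

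The one genuine gap is the sentence deducing that every point of $D\setminus K$ is internal to $XY=-d$. You show that the line $Y=mX$ joining $(m)$ to the origin is an external line of $K$ and conclude ``hence $(m)$ is internal''. That implication is false in general: a point lying on an external line can perfectly well be an external point (for $q$ odd an external point lies on $(q^2-1)/2$ external lines). Your computation is the right one, but it is attached to the wrong object. Either argue via tangents: the tangent at an affine point $(x_0,y_0)$ of $XY=-d$ has slope $d/x_0^2$, which lies in the non-square class of $\GF(q^2)^*$ and hence never equals an element of $\GF(q)^*$ (all of which are squares in $\GF(q^2)$), so no tangent passes through $(m)$ with $m\in\GF(q)^*$ and each such point is internal. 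Or note that $Y=mX$ is precisely the polar of the point $(-m)$ (the polar of $(1:m:0)$ with respect to $XY+dZ^2$ is $mX+Y=0$), so your computation shows that $(-m)$ is internal for every $m\in\GF(q)^*$, which suffices because $\GF(q)^*$ is closed under negation. Either repair is one line --- or one can simply invoke the paper's own criterion from the section on external points, which is what the remark following the theorem intends via Proposition~\ref{hyp0} --- but as written the step is a non-sequitur.
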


Note that the odd case of the above theorem essentially is (one part of)
Proposition \ref{hyp0}.

O'Keefe and Pascasio \cite{O'KP} give a complete description of the
resulting configurations in the Hall plane for $q=3$.

The next case we consider is that the line at infinity is a secant of the
hyperbola $K$ with two conjugate infinite points outside the derivation set.

\begin{theorem}
Suppose that the line at infinity is a secant of a hyperbola $K$ whose infinite points are
conjugate, so outside of the (standard) derivation set $D$. Assume that $q>3$ is
odd. Then either all points of $D$ are internal, and $K$ (together with the two
infinite points) is an inherited oval in the Hall plane, or all points of $D$
are external and now we find (two) lines containing $q+1$ points of $K$.
\end{theorem}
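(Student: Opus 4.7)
The plan is to invoke the dichotomy from Proposition \ref{hyp0} and treat the internal and external cases separately. In the internal case, I would show that $K$ together with its two infinite points $I_1, I_2$ is an oval in the Hall plane. Old lines meet $K$ in at most two points since $K$ is a conic of $\PG(2,q^2)$, and $\ell_\infty$ meets $K \cup \{I_1, I_2\}$ in exactly $I_1, I_2$. If some new line (a Baer subplane $B$ containing $D$) contained three distinct affine points $A_1, A_2, A_3$ of $K$, then each line $A_iA_j$ would have two points of $B$ and hence meet $\ell_\infty \cap B = D$; this would produce a triangle inscribed in $K$ whose sides pass through three points of $D$, all internal by hypothesis, contradicting Proposition \ref{skodd}. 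Hence every new line meets $K$ in at most two points and $K \cup \{I_1, I_2\}$ is a $(q^2+1)$-arc.

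In the external case I must produce two new lines of the Hall plane each containing $q+1$ points of $K$, equivalently two Baer subplanes $B \supset D$ for which $B \cap K$ is a full subconic of $B$ (with all $q+1$ points affine, since the infinite points of $K$ lie outside $D$). After a $\GF(q)$-linear change of coordinates preserving $D$, I assume the infinite points of $K$ are $(1{:}\pm\theta{:}0)$ with $\theta^2 \in \GF(q)$ a non-square; since $K$ has an affine center, a translation (still an affinity fixing $D$ pointwise) puts $K$ in the form $UV = k$ where $U = Y - \theta X$, $V = Y + \theta X$, and $k \in \GF(q^2)^*$. The enumerative input I need is that the group of affinities fixing $D$ pointwise, $\phi(x,y) = \mu(x,y)+(a,b)$ with $\mu \in \GF(q^2)^*$ and $(a,b) \in \GF(q^2)^2$, is transitive on the $(q+1)q^2$ Baer subplanes through $D$, and in $(U,V)$-coordinates each such subplane has the form $B(\nu, c) = \{(U,V) : V = \nu\bar U + c\}$ with $\nu = \mu/\bar\mu$ in the norm-$1$ subgroup of $\GF(q^2)^*$ (of order $q+1$) and $c \in \GF(q^2)$ arbitrary.

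With this parametrization, $K \cap B(\nu,c) = \{U \in \GF(q^2) : \nu|U|^2 + cU = k\}$. For $c = 0$ this reduces to $|U|^2 = k/\nu$, which has $q+1$ solutions iff $k/\nu \in \GF(q)^*$ and none otherwise; for $c \neq 0$ the real and imaginary parts of the equation give two conics $C_1, C_2$ in $(u_0,u_1)$ over $\GF(q)$, and their combination $\nu_1 C_1 - \nu_0 C_2$ turns out to be a nontrivial linear form (its vanishing would force $|c|^2 = 0$), so a line-meets-conic argument bounds the number of solutions by two. Counting $\nu$ in the norm-$1$ subgroup with $k/\nu \in \GF(q)^*$ reduces to solving $r^2 = 1/N(k)$ for $r \in \GF(q)^*$, which has exactly two solutions iff $N(k)$ is a square in $\GF(q)^*$; by the standard norm-square equivalence $k \in (\GF(q^2)^*)^2 \iff N(k) \in (\GF(q)^*)^2$, together with a short tangent computation from a $D$-point $(1{:}s{:}0)$ identifying ``all $D$-external'' with ``$k \in (\GF(q^2)^*)^2$'' (the tangency discriminant is $k$ times a nonzero $\GF(q)$-element, always a square in $\GF(q^2)^*$), the external hypothesis produces exactly two values $\nu_1, \nu_2$ and hence the two required subplanes $B(\nu_1, 0), B(\nu_2, 0)$.

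The trickiest step, in my view, is the enumeration of the $(q+1)q^2$ Baer subplanes through $D$: restricting attention to only the $q^2$ translates of $B_0$ (which already partition the affine points) would miss the $q+1$ dilation classes coming from $\mu \in \GF(q^2)^*/\GF(q)^*$, and would make the external case appear to yield only one $(q+1)$-rich line rather than two.
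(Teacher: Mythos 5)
Your proof is correct, but for the external case it follows a genuinely different route from the paper. The internal case matches the paper exactly: the dichotomy is Proposition \ref{hyp0}, and the non-existence of inscribed triangles over three internal points (Proposition \ref{skodd}) rules out $3$-secant new lines. For the external case, however, the paper stays with the Segre--Korchm\'aros machinery: by Proposition \ref{skodd} every triple of (external) points of $D$ carries exactly two inscribed triangles, and these triangles are then observed to assemble into two subconics lying in two Baer subplanes on $D$ (in the spirit of Proposition \ref{hypeven}, using that the quadratic part of $K$ can be normalized over $\GF(q)$). You instead compute directly: normalizing $K$ to $UV=k$ with $U=Y-\theta X$, $V=Y+\theta X$, parametrizing the new lines as $V=\nu\bar U+c$ with $\nu$ of norm $1$, and reducing the $c=0$ case to the norm equation $N(U)=k/\nu$, whose solvability for exactly two values of $\nu$ is equivalent (via $k\in(\GF(q^2)^*)^2\iff N(k)\in(\GF(q)^*)^2$ and the tangent-discriminant computation) to all points of $D$ being external. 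Your parametrization and all the equivalences check out. What your approach buys is an explicit identification of the two $(q+1)$-rich new lines --- they are precisely two of the $q+1$ new lines through the centre of the hyperbola, the rest of which miss $K$ entirely --- together with the bonus fact that new lines with $c\neq 0$ meet $K$ in at most two points; what the paper's approach buys is brevity and conceptual continuity with the triangle lemma used throughout. One loose end in your $c\neq 0$ argument (not needed for the statement): after eliminating the quadratic terms you intersect the resulting line with one of the two conics, and you should note that the line cannot be a component of that conic (or simply invoke Corollary \ref{skcor} to cap the intersection at $4$ unless it is a full subconic).
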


The first case is just Proposition \ref{hyp0} together with Proposition \ref{skodd}.
If all points are external, then again by Proposition \ref{skodd}, for every triple
of points in $D$, there are two corresponding triangles in $K$, and these
together form two ellipses in two Baer subplanes on $D$.

Remark: if $q$ is even, and the two infinite points of $K$ are conjugate, then
we find exactly one line in the Hall plane with $q+1$ points of $K$ as a
consequence of Proposition \ref{hypeven}.

The third case to consider is that the line at infinity is a secant of the
hyperbola $K$ with one point in the derivation set, and one outside.

The following proposition makes more precise what we already mentioned in the
section about consequences of the theorem by Segre and Korchm\'aros, together
with {Proposition}~\ref{hyp1}.

\begin{theorem}\label{hsz} 
If $q>5$ is odd and $K$ is a hyperbola with one point in the derivation set, and
one point outside of it, then the affine part of $K$ does not give an arc in the
Hall plane, moreover, lines of the Hall plane intersect it in at most three
points. If $s=\frac12(q\pm 1)$ denotes the number of external points on our
derivation set $D$, then the total number of collinear triples in the Hall plane
is $2s\binom {q-s}{2}+ 2\binom s3$, which for $q$ large enough is roughly
$7q^3/48$, so small.
\end{theorem}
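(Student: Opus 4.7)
The plan is to establish the three assertions in order, leveraging Proposition \ref{hyp1} for the external/internal split of $D$, Corollary \ref{skcor} together with Proposition \ref{sk2} for the secant bound, and Proposition \ref{skodd} for the count of collinear triples. Denote by $I$ the unique infinite point of $K$ lying in $D$. Proposition \ref{hyp1} tells us that $D\setminus\{I\}$, a set of $q$ points, splits into $s=(q\pm1)/2$ external and $q-s$ internal points with respect to $K$, the sign depending on the quadratic character of $\beta\bar\beta$ in $\GF(q)$.

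For the secant bound I would observe that old lines are lines of $\PG(2,q^2)$, hence meet the conic $K$ in at most two points and contribute no collinear triple. A new line is the affine part of a Baer subplane $B\supset D$, and Corollary \ref{skcor} gives either $|B\cap K|\le 4$ or $B\cap K$ is a subconic $K_0$ of $B$. The latter alternative I would rule out as follows: for $q\ge 5$ the subconic $K_0$ has at least $q+1\ge 6$ points, so $K$ is the unique conic of $\PG(2,q^2)$ extending $K_0$ by the same equation. Proposition \ref{sk2} applied to the pair $(K_0,D)$ then states that $D$ is tangent to $K_0$ iff $\ell_\infty$ is tangent to $K$, while $|D\cap K|=1$ forces $D$ to be tangent to $K_0$, contradicting that $K$ is a hyperbola. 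Thus $|B\cap K|\le 4$, and since $I\in B\cap K$ is one of these points, the new line contains at most three affine points of $K$.

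For the triple count, three affine points $A_1,A_2,A_3$ of $K$ are collinear on a new line iff they lie in a common Baer subplane through $D$, which by standard Baer subplane geometry is equivalent to the three points $P_k:=A_iA_j\cap\ell_\infty$ all belonging to $D$. Since $K$ is an arc in $\PG(2,q^2)$, the $P_k$ are distinct, and since $P_k\notin K$ we have $\{P_1,P_2,P_3\}\subset D\setminus\{I\}$. By Proposition \ref{skodd} applied with $r=\ell_\infty$, an unordered triple in $D\setminus\{I\}$ gives rise to exactly two inscribed triangles of $K$ with the prescribed intersections when it contains either three external or exactly one external point, and to none in the remaining cases. Summation yields
\[
2\binom{s}{3}+2s\binom{q-s}{2}
\]
collinear triples of affine points of $K$ in the Hall plane. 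For $q\ge 7$ both summands are strictly positive (since $s,q-s\ge 3$), establishing that $K$ is not an arc; substituting $s=q/2+O(1)$ makes the total of order $q^3/6$.

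The main obstacle is translating the condition \emph{``three affine points of $K$ lie on a common new line of the Hall plane''} into the Segre--Korchm\'aros configuration of Proposition \ref{skodd}, namely that the pairwise joins meet $\ell_\infty$ in three points of $D\setminus\{I\}$. Once this dictionary is in place, all three assertions reduce to routine bookkeeping with the external/internal split provided by Proposition \ref{hyp1}.
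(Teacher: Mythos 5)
Your proposal is correct and follows essentially the same route as the paper, which proves this theorem only implicitly by combining Proposition~\ref{hyp1} (the external/internal split of $D\setminus\{I\}$), Corollary~\ref{skcor} together with Proposition~\ref{sk2} (ruling out a subconic in a Baer subplane through $D$, hence at most $3$ affine points per new line), and Proposition~\ref{skodd} (two triangles per admissible triple), exactly as you do. One remark: your asymptotic $q^3/6$ is the correct value of $2s\binom{q-s}{2}+2\binom{s}{3}$ for $s\approx q/2$, namely $8q^3/48$; the ``roughly $7q^3/48$'' in the statement appears to be a slip (it corresponds to forgetting the factor $2$ on the $\binom{s}{3}$ term), so the discrepancy is in the paper, not in your argument.
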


The final case to consider is where both infinite points of the hyperbola $K$
are outside the derivation set $D$ and are not conjugate. From Proposition
\ref{ellhyp} we know that the number of internal/external points in $D$ is at
most $q/2-1-\sqrt{q}$.

If $s$ denotes the number of external points in our derivation set $D$, then the
total number of collinear triples in the Hall plane is $2s\binom
{q+1-s}2+2\binom s3$, roughly $7q^3/48$, using the above bound on $s$. Note also
that we do not have collinear sets of size 5 or more, since 5 points of our Baer
subplanes extend to a conic with points at infinity. We will return to this case in
section \ref{sec:ellhyp} where the case $q$ even is studied in more detail.

\section{Inherited ellipses for $q$ odd}

The last case to consider is an ellipse $K$ in the affine plane $\AG(2,q^2)$, so
a conic without points on the line at infinity. Let $q$ be odd. Then, on the
line at infinity $\ell_\infty$ we have $(q^2+1)/2$ external and $(q^2+1)/2$
internal points. If $D$ is a Baer subline of $\ell_\infty$ then $K$ is again an
oval in the derived plane if and only if the derivation set $D$ is disjoint from
the set of external points (on $\ell_\infty$), as a consequence of Proposition
\ref{skodd}. In Proposition \ref{ellhyp} we have seen that this is impossible
for $q>7$. The following combinatorial proof works for all $q$.

\begin{theorem}
Let $q$ be odd, $K$ an ellipse in $\AG(2,q^2)$, then $K$ does not remain an
oval in ${\rm Hall}(q^2)$.
\end{theorem}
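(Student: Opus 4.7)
The plan is to translate the oval condition into a polynomial condition on a Baer subline of $\ell_\infty$ and then contradict the Hasse--Weil bound for a genus-one curve. First I would combine Proposition~\ref{skodd} with Corollary~\ref{skcor} to observe that $K$ is an oval in $\Hall(q^2)$ if and only if the derivation set $D$ consists entirely of internal points of $K$: an external point in $D$ would allow us to form a triple of points of $D$ with one or three external members, which by Proposition~\ref{skodd} yields two inscribed triangles in $K$, placing three points of $K$ in a common Baer subplane through $D$. So I would argue by contradiction, assuming $D$ lies entirely in the set of internal points of $K$ on $\ell_\infty$.

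Next I would normalize $Q(X,Y,Z)=X^2+aXY+bY^2+ZL(X,Y,Z)$ so that $Q(1,0,0)=1$, which is legitimate because an ellipse has no infinite point. Since $(1:0:0)\in D$ is internal by hypothesis, the remark preceding Proposition~\ref{para} pins $\chi(Q(P))=\chi(1)=+1$ for every internal $P$, hence for every $P\in D$. Using $\chi=\chi'\circ N_{\GF(q^2)/\GF(q)}$, with $\chi'$ the quadratic character of $\GF(q)^\ast$, this rephrases the assumption as
\[
M(u):=(1+au+bu^2)(1+\bar{a}u+\bar{b}u^2)\in\GF(q)^\ast \text{ being a square for every } u\in\GF(q),
\]
together with $N(b)=b\bar{b}$ being a square in $\GF(q)$.

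Since $K$ is an ellipse, $P(T)=1+aT+bT^2$ is irreducible over $\GF(q^2)$, and the coefficients $a,b$ cannot both lie in $\GF(q)$ (else the discriminant $a^2-4b$ would be a square in $\GF(q^2)$, forcing $P$ to split). Hence $P$ and $\bar{P}$ are distinct coprime irreducibles over $\GF(q^2)$, $M=P\bar{P}$ is squarefree of degree~$4$ over $\GF(q)$, and the affine curve $\mathcal{C}\colon v^2=M(u)$ is a smooth genus-one curve. Under the hypothesis, $\mathcal{C}$ has $2q$ affine $\GF(q)$-rational points (two for each $u\in\GF(q)$) and, since the leading coefficient $N(b)$ of $M$ is a square, two rational points at infinity, so $|\mathcal{C}(\GF(q))|=2q+2$. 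The Hasse--Weil bound $|\mathcal{C}(\GF(q))|\leq q+1+2\sqrt{q}$ then forces $(\sqrt{q}-1)^2\leq 0$, which is impossible for $q\geq 2$; this contradiction finishes the proof.

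The main obstacle will be verifying that $\mathcal{C}$ really is smooth of genus one, i.e., that $M$ is squarefree; once this structural fact is in hand, the remainder is a direct count. The approach is uniform in $q$, sharper than the bound of Proposition~\ref{ellhyp} (which only resolves $q>7$), and in particular closes the remaining small cases $q\in\{3,5,7\}$ in one stroke.
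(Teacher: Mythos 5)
Your proof is correct, but it takes a genuinely different route from the paper's. The paper proves this theorem by a purely combinatorial double count: it looks at the partition $E\cup I$ of $\ell_\infty$ into external and internal points, notes that the stabilizer of this partition in $\PGL(2,q^2)$ has order $2(q^2+1)$ while a Baer subline has stabilizer of order $(q+1)q(q-1)$ with $\gcd(q^2+1,(q+1)q(q-1))=2$, so a single Baer subline inside $I$ would force at least $(q^2+1)/2$ of them in each class, and then counting triples $(P_e,P_i,B)$ in two ways gives $\frac14(q^2+1)^2(q+1)\le\frac14(q^2+1)(q-1)(q+1)^2$, a contradiction --- no algebraic geometry at all. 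You instead sharpen the curve-theoretic argument that the paper already uses in Proposition~\ref{ellhyp}: rather than merely bounding the number of internal points of $D$ by $q/2+1+\sqrt q$ (which, as the paper notes, only rules out $q>7$), you assume \emph{all} of $D$ is internal, observe that this forces both $M(u)$ to be a nonzero square for every $u\in\GF(q)$ and the leading coefficient $b\bar b$ to be a square, so the smooth genus-one model of $v^2=M(u)$ has exactly $2q+2$ rational points (including the two at infinity), contradicting the exact Hasse--Weil bound $q+1+2\sqrt q$ for every $q\ge 2$. The two gains over Proposition~\ref{ellhyp} are precisely the clean $2\sqrt q$ on the smooth model in place of $2\sqrt q+2$, and the two rational points at infinity; your verification that $M=P\bar P$ is squarefree (since $a,b\in\GF(q)$ would make $a^2-4b$ a square in $\GF(q^2)$ and split $P$) is the right structural check. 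What each approach buys: the paper's argument is elementary, uniform in $q$, and gives the symmetric conclusion for external points for free; yours stays entirely within the machinery of Section~5, shows that the Weil-bound route did not actually need to be abandoned for small $q$, and likewise yields the external case by the same token (all of $D$ external would give a genus-one curve with no rational points, violating the lower bound $q+1-2\sqrt q>0$).
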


\begin{proof}
Here we essentially just count. Consider the line $\ell_\infty$ together with the
partition $E\cup I$ into external and internal points. The subgroup of ${\rm
PGL}(2,q^2)$ stabilizing this partition has order $2(q^2+1)$, there is a
dihedral group of order $q^2+1$ fixing the set $E$ and an extra factor $2$
because we may interchange $E$ and $I$. Now how does this group act on the set
of Baer-sublines, or better, how large are the orbits? The stabilizer of a
Baer-subline has order $(q+1)q(q-1)$, and the greatest common divisor of
$(q^2+1)$ and $(q+1)q(q-1)$ is $2$, this means that if we find a Baer-subline
contained in $E$, we find $(q^2+1)/2$, and an additional set of this size in
$I$. Now let us count the number $N$ of triples $(P_e,P_i,B)$, of an external
point $P_e$, an internal point $P_i$ and a Baer-subline $B$ containing them.
Since every pair of points is contained in $(q^2-1)/(q-1)=(q+1)$ Baer-sublines,
we find $N=\frac14(q^2+1)^2(q+1)$. Now we count in the other way, the total
number of Baer-sublines is $(q^2+1)q^2(q^2-1)/((q+1)q(q-1))=(q^2+1)q$, but if we
assume that there is a Baer-subline contained in $E$, then at least $(q^2+1)$ of
them do not  contribute to our counts, so we find $N\le
\frac14(q^2+1)(q-1)(q+1)^2<\frac14(q^2+1)^2(q+1)$, contradiction.
\end{proof}

\begin{remark}
\begin{enumerate}[(i)]
\item If $s$ denotes the number of external points in our derivation set $D$, then as
in the hyperbola case the total number of collinear triples in the Hall plane is
$2s\binom {q+1-s}2+ 2\binom s3$, roughly $7q^3/48$, using the bound on $s$ from
Proposition \ref{ellhyp}. 
\item Also, we do not have collinear sets of size 5 or more, since 5 points in
of our Baer-subplanes extend to a conic with points at infinity.
%\item Non-exhaustive computer calculations show that for fixed small odd $q$,
%many different values are possible for the number of $3$-secant new lines.
\end{enumerate}
\end{remark}

We finish this section with an open problem concerning the exact number of $3$-secant new lines. 

\begin{question}
Let $q$ be odd, $K$ an ellipse or a hyperbola with non-conjugate infinite points in $\AG(2,q^2)$. Let $s$ denote the number of external points of $K$ in the derivation set $D$. Find a formula for the number $a_3$ of $3$-secant new lines in terms of $q$ and $s$. 
\end{question} 

In the last section, we answer this question for the even $q$ case. 

\section{Inherited ellipses and hyperbolas for $q$ even} \label{sec:ellhyp}

In Theorem \ref{resteven}, we showed that if $q$ is even and $K$ is either
an ellipse or a hyperbola having two non-conjugate infinite points, then in the
Hall plane $K$ has $\binom{q+1}{3}$ collinear triples. In this section, we give
an explicit formula for the number $a_3$ of $3$-secant new lines. By $a_3+4a_4=\binom{q+1}3$ 
this also determines the number $a_4$ of $4$-secant new lines. 

\begin{theorem} \label{th:ellhyp}
Let $q$ be even, and $D$ a derivation set on $\ell_\infty$ of $\AG(2,q^2)$. Let $K$ be either an ellipse or a hyperbola such that the infinite points of $K$ are non-conjugate and none of them is contained in $D$. Then, the number of $3$-secant new lines is $a_3=q(q-1)/2$.
\end{theorem}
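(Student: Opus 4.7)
The plan is to use the identity $a_3 + 4a_4 = \binom{q+1}{3}$ from Theorem~\ref{resteven}(a), which reduces the problem to showing $a_4 = q(q-1)(q-2)/24$. I would parameterize the affine part of $K$ by an element $t \in \GF(q^2)^*$: in the hyperbola case, by projection from one of the two infinite points of $K$ onto an affine line, so that the two infinite points of $K$ correspond to $t=0$ and $t=\infty$; in the ellipse case, by a stereographic-type parameterization from a chosen base point of $K$. The aim is to translate the combinatorial $4$-secant condition into an algebraic condition on a $4$-tuple of parameters.

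A direct computation with this parameterization shows that, after a suitable M\"obius change of variable, the point $A(s)A(t) \cap \ell_\infty$ depends only on the product $st$, and it lies in $D$ if and only if $st$ belongs to a specific Baer subline $S \subset \PG(1,q^2)$ with $|S|=q+1$. Consequently, the $4$-secant new lines correspond bijectively to $4$-subsets $\{t_1,t_2,t_3,t_4\} \subset \GF(q^2)^*$ all of whose six pairwise products $t_it_j$ lie in $S$. Writing $r = t_1 t_2 t_3 t_4$, the six products split canonically into three pairs $\{t_1t_2, t_3t_4\}, \{t_1t_3,t_2t_4\}, \{t_1t_4,t_2t_3\}$, each having product $r$. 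Hence the multiplicative involution $\sigma_r : s \mapsto r/s$ must preserve $S$ setwise, pairing six of its elements and fixing the unique element $\sqrt{r} \in S$ (we use that Frobenius is a bijection in characteristic two, so square roots are unique).

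It remains to count, for each admissible $r$ (one for which $\sigma_r$ preserves $S$), the number of ordered tuples $(t_1,\ldots,t_4)$ yielding it: pick three of the $q/2$ $\sigma_r$-pairs in $S\setminus\{\sqrt{r}\}$, assign them to the three diagonal pairings in $3!$ ways, and recover the $t_i$'s via unique characteristic-two square roots (e.g. $t_1 = \sqrt{s_{12}s_{13}/s_{23}}$). Summing over admissible $r$ and dividing by $4!=24$ should give $a_4$. The main obstacle is making this count precise: identifying the admissible $r$'s (they form a single orbit under the setwise stabilizer of $S$ in $\PGL(2,q^2)$), checking the non-degeneracy conditions that guarantee four distinct $t_i$'s, and verifying that the total comes out to $q(q-1)(q-2)/24$ uniformly in $K$ (ellipse or hyperbola). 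A cleaner alternative route is to first show, by a transitivity argument based on the subgroup of $\PGL(3,q^2)$ preserving $\ell_\infty$ and $D$, that $a_4$ depends only on the type of $K$, and then verify the claimed value for a single canonical $K$ chosen so that $S$ is in standard position.
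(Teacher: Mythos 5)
Your approach is genuinely different from the paper's: you reduce to counting the $4$-secant new lines $a_4$ via the identity $a_3+4a_4=\binom{q+1}{3}$ and a multiplicative normalization in which the direction of the chord $A(s)A(t)$ depends only on $st$, whereas the paper counts $a_3$ directly. (Its key tool, Lemma \ref{lm:3pts_tang}, shows via intersection multiplicities of $K$ with its conjugate $\bar K$ that every $3$-secant new line contains a unique point of $K$ whose tangent meets $D$; this localizes the count at the $q+1$ tangent points from the points of $D$ and reduces everything to explicit trace conditions.) However, your proposal has genuine gaps, which you yourself flag as ``the main obstacle'': the decisive counting steps are not carried out. First, the number of admissible $r$ (those with $\sigma_r$ preserving $S$) is never determined; for the final answer to come out it must equal exactly $1$, and nothing in the sketch establishes this. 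Second, your per-$r$ count is incomplete even in structure: besides the $4$-sets whose six pairwise products form three genuine $\sigma_r$-orbits (contributing $2\binom{q/2}{3}$ after dividing out the $24$ labelings), there are degenerate $4$-sets with $t_1t_2=t_3t_4=\sqrt{r}$, i.e.\ one diagonal product equal to the fixed point of $\sigma_r$; these contribute an additional $\binom{q/2}{2}$, and only with this correction does $2\binom{q/2}{3}+\binom{q/2}{2}=q(q-1)(q-2)/24$ hold. Without it, no integer value of the number of admissible $r$ can reproduce the claimed $a_4$, so the omission is not cosmetic.

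Third, the ellipse case does not fit the sketch as written. The M\"obius change of variable that diagonalizes the pencil of chord-involutions to the form $st=\mathrm{const}$ must send the two infinite points of $K$ to $0$ and $\infty$; for an ellipse these are conjugate points of $\PG(1,q^4)\setminus\PG(1,q^2)$, so the normalization is only defined over $\GF(q^4)$. After it, the parameter set of the affine points of $K$ and the set $S$ are sublines of $\PG(1,q^4)$ in non-standard position relative to the multiplicative structure, and one must verify both that $S$ is still a $\PG(1,q)$-subline in the relevant sense and that the square-root recovery $t_1=\sqrt{s_{12}s_{13}/s_{23}}$ lands back in the correct copy of $\PG(1,q^2)$. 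Your fallback suggestion (prove $a_4$ depends only on the type of $K$ by a transitivity argument and compute one example) is reasonable but is likewise only a plan; the required transitivity of the stabilizer of $(\ell_\infty,D)$ on the relevant conics is asserted, not proved. As it stands the argument is a promising outline, not a proof.
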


\begin{lemma} \label{lm:qform_c}
Let $u_1,u_2\in \GF(q^4)$ be the roots of the quadratic polynomial $f(X)=X^2+\beta X+\gamma \in \GF(q^2)[X]$ and assume $u_i \not\in \{ u_i^q, u_j, u_j^q\}$, where $\{i,j\}=\{1,2\}$. Then there is a $\GF(q)$-rational map $z\mapsto \frac{az+b}{cz+d}$ which brings $f(X)$ to the form $X^2+X+w$ with some $w\in \GF(q^2)\setminus \GF(q)$. 
\end{lemma}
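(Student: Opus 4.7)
My plan is to turn the problem into a system of equations in four $\GF(q)$-unknowns and then solve it by cases. Given $\phi(X)=(aX+b)/(cX+d)$ with $a,b,c,d\in\GF(q)$ and $ad+bc\neq 0$, a direct expansion of $(cY+a)^{2}f((dY+b)/(cY+a))$ in characteristic~$2$ yields the degree-$2$ polynomial $g(Y)=AY^{2}+BY+C$ with roots $\phi(u_{1}),\phi(u_{2})$, where
$A=d^{2}+\beta cd+\gamma c^{2}$, $B=\beta(ad+bc)$, $C=b^{2}+\beta ab+\gamma a^{2}$. So I need to find $(a,b,c,d)\in\GF(q)^{4}$ with $ad+bc\neq 0$ satisfying $B=A$ (to produce the coefficient $1$ on $Y$) and $C/A\notin\GF(q)$ (noting that $C/A$ lies in $\GF(q^{2})$ automatically). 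Since $u_{1}\neq u_{2}$, one has $\beta\neq 0$.

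The equation $B=A$ is one equation over $\GF(q^{2})$, hence two over $\GF(q)$ once we expand $\beta=\beta_{0}+\beta_{1}\omega$, $\gamma=\gamma_{0}+\gamma_{1}\omega$ in a $\GF(q)$-basis of $\GF(q^{2})$. The principal case is $\beta\notin\GF(q)$, i.e., $\beta_{1}\neq 0$. I would set $c=1$; the $\omega$-component of $B=A$ is then linear in $b$ and yields $b=\mu+d(a+1)$ with $\mu:=\gamma_{1}/\beta_{1}\in\GF(q)$, and substituting back the $1$-component reduces to $d^{2}=\beta_{0}\mu+\gamma_{0}$. Since squaring is a bijection on $\GF(q)$ when $q$ is even, this admits a unique solution $d\in\GF(q)$, giving a one-parameter family of candidate maps indexed by $a$.

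A short computation using the identity $d^{2}+\beta d+\gamma=\beta(\mu+d)$ (which follows from the formulas for $d$ and $\mu$) then collapses $A$ to $\beta(\mu+d)$, reduces $ad+bc$ to $\mu+d$, and puts $C/A$ in the closed form $(\mu+d)/\beta+a(a+1)$. Both the non-degeneracy $ad+bc\neq 0$ and the target condition $C/A\notin\GF(q)$ therefore hinge on the single claim $\mu+d\neq 0$. Here is the main obstacle: squaring the equality $\mu=d$ and combining with $\beta_{1}\mu+\gamma_{1}=0$ shows that $\mu+d=0$ would force $f(\mu)=0$, exhibiting a $\GF(q)$-rational root of $f$. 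This is precisely ruled out by the hypothesis $u_{i}\neq u_{i}^{q}$, which says $u_{1},u_{2}\notin\GF(q)$. With $\mu+d\in\GF(q)^{\ast}$ and $\beta\in\GF(q^{2})\setminus\GF(q)$, the term $(\mu+d)/\beta$ lies outside $\GF(q)$ and cannot be cancelled by $a(a+1)\in\GF(q)$.

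The residual case $\beta\in\GF(q)^{\ast}$ is handled separately. The hypotheses force $\gamma\notin\GF(q)$, for otherwise $f\in\GF(q)[X]$ would make $\{u_{1},u_{2}\}$ Frobenius-stable, contradicting either $u_{i}\neq u_{i}^{q}$ (both roots in $\GF(q)$) or $u_{1}\neq u_{2}^{q}$ (roots conjugate over $\GF(q)$ in $\GF(q^{2})$). Now the $\omega$-component of $B=A$ becomes $\gamma_{1}c^{2}=0$, so $c=0$, and the $1$-component forces $d=\beta a$. Any $a\in\GF(q)^{\ast}$, $b\in\GF(q)$ then yields a valid $\phi$, and a direct check gives $C/A=(b/\beta a)^{2}+b/\beta a+\gamma/\beta^{2}$, whose $\omega$-coefficient $\gamma_{1}/\beta^{2}$ is nonzero. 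In either case the Möbius map produced sends $f$ to $X^{2}+X+w$ with $w\in\GF(q^{2})\setminus\GF(q)$, as required.
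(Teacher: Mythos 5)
Your proof is correct and rests on the same core computation as the paper's: expand the image of $f$ under a $\GF(q)$-rational M\"obius substitution, force the $Y$-coefficient to equal the leading coefficient, and check that the resulting constant term lies outside $\GF(q)$. (Your $A,B,C$ are the paper's three coefficients with $a$ and $d$ interchanged, because you substitute the inverse map; this is immaterial.) The difference lies in how the equation $B=A$ is solved. The paper assumes $\beta\notin\GF(q)$ and splits on whether $\gamma/\beta\in\GF(q)$, using $\{\beta,\gamma\}$ as a $\GF(q)$-basis of $\GF(q^2)$ in the second subcase; you normalize $c=1$, read off $b$ from the $\omega$-component and $d$ from the $1$-component (using that squaring is a bijection in characteristic $2$), and arrive at the closed form $w=(\mu+d)/\beta+a(a+1)$, with the single non-degeneracy condition $\mu+d\neq 0$ traced back to the hypothesis $u_i\neq u_i^q$ via the observation that $\mu+d=0$ would make $\mu\in\GF(q)$ a root of $f$. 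I verified the identities you invoke ($A=\beta(\mu+d)$, $ad+bc=\mu+d$, and the formula for $C/A$); they all hold. Your argument also has a concrete advantage: the paper's proof, as written, opens with ``Assume first that $\beta\in\GF(q^2)\setminus\GF(q)$'' and never treats the complementary case, whereas your second case ($\beta\in\GF(q)^*$, which forces $\gamma\notin\GF(q)$, hence $c=0$ and $d=\beta a$, giving $w$ with $\omega$-coefficient $\gamma_1/\beta^2\neq 0$) closes that gap, and correctly identifies $u_1\neq u_2^q$ as the hypothesis needed there.
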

\begin{proof}
The fact $u_i\neq u_j$ implies $\beta\neq 0$. Straightforward calculation shows
\[f(u(z))=\frac{(a^2+\beta ac+\gamma c^2)z^2 + \beta(cb+ad)z +b^2+\beta db+\gamma d^2}{(cz+d)^2}.\]

%If $\beta \in \GF(q)$ then $u(z)=\beta z$ does the job. 
Assume first that $\beta \in \GF(q^2)\setminus \GF(q)$. If $\gamma = t\beta$ with $t\in \GF(q)$ then $u(z)=t/z$ brings $f(Z)$ to the form $X^2+X+t/\beta$. If $\gamma/\beta \not\in \GF(q)$, then $\beta,\gamma$ forms a $\GF(q)$-basis of $\GF(q^2)$ and there are unique elements $t_1,t_2\in\GF(q)$, $t_2\neq 0$, such that 
\[1=t_1\beta + t_2\gamma.\]
Define $a=b=1$, $c=\sqrt{t_2}$, $d=1+\sqrt{t_2}$. Then 
\begin{align}
a^2+\beta ac+\gamma c^2 &= (t_1+\sqrt{t_2})\beta, \label{eq:f1}\\
\beta(cb+ad)&=\beta,\label{eq:f2}\\
b^2+\beta bd+\gamma d^2 &= \beta(1+t_1+\sqrt{t_2})+\gamma.\label{eq:f3}
\end{align}
By \eqref{eq:f1}, $t_1+\sqrt{t_2}=0$ implies that $c/a=\sqrt{t_2}$ is a root of $f(X)$, which contradicts to $u_i\neq u_i^q$. Hence, $u$ brings $f(X)$ to the form $f_0(X)=X^2+\beta_0 X+\gamma_0$, with
\[\beta_0=\frac{1}{t_1+\sqrt{t_2}}\in GF(q)^*.\]
Now, $v(z)=\beta_0 z$ brings $f_0(X)$ to the desired form $X^2+X+w$, where $w\not\in \GF(q)$ follows from $u_i\neq u_j^q$. 
\end{proof}

Lemma \ref{lm:qform_c} implies that $\AG(2,q^2)$ has an affine coordinate frame in which $D=\{ (x,y,0) \mid x,y\in \GF(q)\}$ and the equation of $K$ has the form  
\[K: X^2+XY+cY^2+uX+vY+w,\]
with $c\in \GF(q^2)\setminus \GF(q)$, $u,v,w\in \GF(q^2)$. All dilations (=translations and homotheties) preserve $D$ and the quadratic component $X^2+XY+cY^2$ of $K$. 

We use the notation $t_P$ for the tangent line of $K$ at the point $P\in K$. 

\begin{lemma} \label{lm:3pts_tang}
Let $B$ be a new line. 
\begin{enumerate}[(i)]
\item If $|B\cap K|=3$ then there is a unique $P\in B\cap K$ such that the tangent $t_P$ intersects $D$. 
\item If $t_P$ intersects $D$ for an element $P\in B\cap K$ then $|B\cap K|\leq 3$. 
\end{enumerate}
\end{lemma}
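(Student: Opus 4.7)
The plan is to apply B\'ezout's theorem to the pair of conics $K$ and $K':=\sigma(K)$, where $\sigma$ denotes the Baer involution of $\PG(2,q^2)$ with fixed plane $B$. The restriction of $\sigma$ to $\ell_\infty$ is the involution fixing $D$ pointwise; since the two infinite points of $K$ are non-conjugate and neither lies in $D$, they are not permuted as a pair by $\sigma$, whence $K'\ne K$. B\'ezout then gives
\[
\sum_P \mu_P(K,K')=4,
\]
summed over points of $K\cap K'$ in $\overline{\GF(q^2)}$ with intersection multiplicity.

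The next step is the observation that $K\cap B\subseteq K\cap K'$: if $P\in K\cap B$ then $\sigma(P)=P\in K$, so $P\in\sigma^{-1}(K)=K'$. At such a $P$ one has $\mu_P(K,K')\ge 2$ iff $K$ and $K'$ share their tangent at $P$. Since $\sigma$ fixes $P$ and interchanges $K$ with $K'$, the tangent to $K'$ at $P$ is $\sigma(t_P)$, so the two tangents agree iff $t_P$ is fixed set-wise by $\sigma$, iff $t_P$ is a line of $B$. As $P\in B$ and $\ell_\infty\cap B=D$, the line $t_P$ is a line of $B$ iff its intersection with $\ell_\infty$ lies in $D$; thus $\mu_P(K,K')\ge 2$ iff $t_P\cap D\ne\emptyset$, and $\mu_P(K,K')=1$ otherwise. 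Intersection points of $K\cap K'$ lying outside $B$ occur in $\sigma$-orbits of size $2$ (for the $\GF(q^2)$-rational ones) or in Galois orbits of size $\ge 2$ (for the remaining, non-rational ones); in either case, their joint contribution to the B\'ezout count is either $0$ or at least $2$.

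The rest is bookkeeping. Set $k=|\{P\in K\cap B:t_P\cap D\ne\emptyset\}|$ and $m=|K\cap B|-k$. The contribution of $K\cap B$ to the B\'ezout sum is at least $2k+m\ge k+m=|K\cap B|$, so the contribution from $K\cap K'\setminus B$ is at most $4-|K\cap B|$. If $|K\cap B|\ge 3$ this is $\le 1$, which together with the dichotomy $0$ or $\ge 2$ forces the non-$B$ contribution to be $0$; hence the entire count of $4$ lives on $K\cap B$. If $|K\cap B|=4$ then the four multiplicities must all equal $1$, so $k=0$, giving the contrapositive of~(ii). If $|K\cap B|=3$ then the only way to distribute the total $4$ among three points each of multiplicity $\ge 1$ is $2,1,1$, so exactly one point of $K\cap B$ has multiplicity~$2$, i.e.\ $k=1$, proving~(i).

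The main technical step is the multiplicity identification $\mu_P(K,K')\ge 2\iff t_P\cap D\ne\emptyset$. One should also rule out higher-order contact (multiplicity $\ge 3$) between $K$ and $K'$ at points of $K\cap B$ when $|K\cap B|\ge 3$, but this is automatic, since each of the $\ge 3$ distinct points of $K\cap B$ already contributes at least $1$ to the total B\'ezout count of~$4$.
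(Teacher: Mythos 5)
Your proof is correct and takes essentially the same route as the paper's: both apply B\'ezout's theorem to $K$ and its Baer/Frobenius conjugate ($K'=\sigma(K)$ is exactly the paper's $\bar K$ after normalizing $B$ to the standard subplane), identify intersection multiplicity $\geq 2$ at a point of $B\cap K$ with the tangent $t_P$ being a line of $B$, i.e.\ meeting $\ell_\infty$ in $D$, and then distribute the total multiplicity $4$. The only (minor) difference is that you explicitly account for the points of $K\cap K'$ outside $B$ via the orbit/conjugate-pair dichotomy, a point the paper glosses over by asserting $B\cap K=K\cap\bar K$.
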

\begin{proof}
Up to dilatations we can assume that $B=\{(x,y)\mid x,y\in \GF(q)\}$, which means that $B\cap K$ consists of the $\GF(q)$-rational points of $K$. Equivalently, $B\cap K = K\cap \bar{K}$, where 
\[\bar{K}: X^2+XY+c^q Y^2+u^q X+v^q Y+w^q\]
is the conjugate of $K$. Counting with multiplicities, $K$ and $\bar{K}$ have $4$ points in common over the algebraic closure of $\GF(q)$. (Cf. B\'ezout's Theorem \cite[Theorem 3.14]{HKT}). If $|B\cap K|=3$ then there is a unique $P\in B\cap K$ such that $K$ and $\bar{K}$ have intersection multiplicity $2$. In particular, $K$ and $\bar{K}$ have a common tangent $t$ at $P$ (see \cite[Proposition 3.6]{HKT}). This means that $t$ is defined over $\GF(q)$ and the infinite point of $t$ is in $D$. This proves (i). 

Conversely, if $t_P\cap D\neq \emptyset$ for some $P\in B\cap K$, then $t_P$ is a common tangent of $K$ and $\bar{K}$. Hence, $K$ and $\bar{K}$ have a common tangent at $P$, which implies an intersection multiplicity at least $2$. Thus, (ii) follows.
\end{proof}

%\medskip

\begin{proof}[Proof of Theorem \ref{th:ellhyp}.]
Fix a point $A=(u,v,0) \in D$. Let $t$ be the tangent from $A$ to $K$, with tangent point $T\in K$. We want to determine the 3-secant new lines through $T$. Lemma \ref{lm:3pts_tang} shows that if $A$ runs through $D$, then this enumerates all 3-secant new lines for $K$. 

Up to dilations, we can have $T=(0,0)$ and let $K$ have equation
\[K:X^2+XY+cY^2+vX+uY.\]
Moreover, any new line through $T$ has the form
\[ B_\lambda=\{(\lambda^{-1}x,\lambda^{-1}y) \mid x,y\in \GF(q) \}\]
for some $\lambda \in\GF(q^2)$. More precisely, since $\lambda$ is given up to a nonzero $\GF(q)$-rational scalar multiple, w.l.o.g. $\lambda=1$ or $\lambda=\lambda_0+c$ with $\lambda_0\in \GF(q)$. By substituting the generic point of $B_\lambda$ in the equation of $K$, we find a $1-1$ correspondence between $B_\lambda \cap K$ and the set of $\GF(q)$-rational points of the conic
\[K_\lambda: X^2+XY+cY^2+v\lambda X+ u\lambda Y.\]

\underline{Case 1: $\lambda=1$.} Then $K_\lambda + \bar{K}_\lambda: Y^2=0$, This implies $Y=0$ and $X^2+vX=0$. The only rational points of $K_\lambda$ are $(0,0)$ and $(v,0)$. Thus, $|B_\lambda \cap K|\leq 2$. 

\underline{Case 2: $\lambda\neq 1$ and $(u,v)=(1,0)$.} As $\lambda=\lambda_0+c$, we have $\lambda+\lambda^q=c+c^q$ and $K_\lambda + \bar{K}_\lambda: (c+c^q)(Y^2+Y)=0$. If $Y=0$ then $X=0$. In order to have $|B_\lambda\cap K|=3$, we need two more rational points, which holds if $Y=1$ and $X^2+X+\lambda_0$ has two distinct roots in $\GF(q)$. This happens if and only if $\mathrm{Tr}_{\GF(q)/\GF(2)}(\lambda_0)=0$. Therefore, we found exactly $q/2$ new lines $B_\lambda$ through $T=(0,0)$ such that $|B_\lambda\cap K|=3$. 

\underline{Case 3: $\lambda\neq 1$ and $v=1$.} Again, $\lambda=\lambda_0+c$ and 
\[K_\lambda+\bar{K}_\lambda: (c+c^q)(Y^2+X+u Y)=0. \]
Substituting $X=Y^2+u Y$ into $K_\lambda$, we have
\[Y^2(Y^2+Y+u^2+u+\lambda_0)=0.\]
If $Y=0$ then $X=0$. If
\[\mathrm{Tr}_{\GF(q)/\GF(2)}(u^2+u+\lambda_0)=\mathrm{Tr}_{\GF(q)/\GF(2)}(\lambda_0)=1\]
then $B_\lambda \cap K=\{(0,0)\}$. If $\lambda_0=u^2+u$ then $Y=0$ or $Y=1$, and $B_\lambda \cap K=\{(0,0), (u+1,1)\}$. Finally, if $\mathrm{Tr}_{\GF(q)/\GF(2)}(\lambda_0)=0$ and $\lambda_0\neq u^2+u$, then $Y^2+Y+u^2+u+\lambda_0$ has two roots in $\GF(q)\setminus \{0,1\}$, giving rise to two rational points of $K_\lambda$, different from $(0,0)$. Hence, we found $q/2-1$ new lines $B_\lambda$ through $T=(0,0)$ such that $|B_\lambda\cap K|=3$. 

Resuming the results, we found $q/2+q(q/2-1)=q(q-1)/2$ new lines which intersect $K$ in exactly $3$ points. 
\end{proof}

\section*{References}


\begin{thebibliography}{00}

\bibitem{BarMar} S.G. Barwick, D.J. Marshall, Conics and multiple derivation,
\emph{Discrete Math.} \textbf{312} (2012), 1623--1632.

\bibitem{Cher} W. Cherowitzo, The classification of inherited hyperconics in
Hall planes of even order, \emph{Europ. J. Comb.} \textbf{31} (2011), 81--86.

\bibitem{GS} D.G. Glynn, G. Steinke, On conics that are ovals in a Hall plane,
\emph{Europ. J. Comb.} \textbf{14} (1993), 521--528.

\bibitem{Hir} J.W.P. Hirschfeld, \emph{Projective Geometries over Finite
Fields}, 2nd ed. Clarendon Press, Oxford (1999).

\bibitem{HKT} J.W.P. Hirschfeld, G. Korchm\'aros, F. Torres, \emph{Algebraic curves over a finite field.} Princeton Series in Applied Mathematics. Princeton University Press, Princeton, NJ (2008).

\bibitem{HSz} J.W.P. Hirschfeld, T. Sz\H{o}nyi, Sets in a finite plane with few intersection
numbers and a distinguished point, \emph{Discrete Mathematics} \textbf{97} (1991), 229--242.

\bibitem{K1} G. Korchm\'aros, Ovali nei piani di Hall di ordine dispari,
\emph{Atti Accad. Naz. Lincei Rend.} \textbf{56} (1974), 315--317.

\bibitem{K2} G. Korchm\'aros, Inherited arcs in finite affine planes,
\emph{J. Combin. Theory A} \textbf{42} (1986), 140--143.

\bibitem{Mar} D.J. Marshall, \emph{Conics, Unitals and Net Replacement}, PhD thesis, Adelaide (2010).

\bibitem{Men} G. Menichetti, $q$-archi completi nei piani di Hall di ordine $q=2^k$,
\emph{Rend.~dell'Accad.~Naz.~Lincei} \textbf{56} (1974), 518--525.

\bibitem{M} G. M\'esz\'aros, \emph{Configurations in non-Desarguesian planes},
MSc Thesis, E\"otv\"os Lor\'and University (2011).

\bibitem{O'KP} C.M. O'Keefe, A.A. Pascasio, Images of conics under derivation,
\emph{Discrete Math.} \textbf{151} (1996), 189--199.

\bibitem{O'KPP} C.M. O'Keefe, A.A. Pascasio, T. Penttila, Hyperovals in Hall
planes, \emph{Europ. J. Comb.} \textbf{13} (1992), 195--199.

\bibitem{S} B. Segre, Introduction to Galois geometries,
\emph{Acta Accad. Naz. Lincei Mem.} \textbf{8} (edited by J.W.P. Hirschfeld)
(1967), 133-236.

\bibitem{SK} B. Segre, G. Korchm\'aros, Una proprieta degli insiemi di punti di
un piano di Galois caratterizzante quelli formati dai punti delle
singole rette esterne ad una conica, \emph{Atti Accad. Naz. Lincei
Rend.} (8)) \textbf{62} (1977), 363--369.

%\bibitem{St} H. Stichtenoth, Algebraic Function Fields and codes,
%Springer Universitext, Springer Verlag, 1993.

\bibitem{SzTBari} T. Sz\H onyi, Complete arcs in non-Desarguesian planes,
\emph{Conf. Sem Mat. Univ. Bari}  \textbf{233} (1989).

\bibitem{SzTJG} T. Sz\H onyi, Arcs and $k$-sets with large nucleus-set in
Hall planes, {\em J. Geom.} \textbf{34} (1989), 187--194.

\bibitem{SzW} T. Sz\H onyi, Zs. Weiner, On the stability of sets of even type,
\emph{Adv. Math.} \textbf{267} (2014), 381--394.

\bibitem{W} F. Wettl, On the nuclei of a point set of a finite projective plane,
\emph{J. Geom.} \textbf{30} (1987), 157--163.

\end{thebibliography}
\end{document}